\title[Convergence of the Calabi flow]
{Convergence of the calabi flow on toric varieties and related K\"ahler manifolds}
\author{Hongnian Huang}
\thanks{The research of the author is financially supported by FMJH (Fondation Math\'ematique Jacques Hadamard).}
\date{May 15, 2012}
\newtheorem{thm}{Theorem}[section]
\newtheorem{cor}[thm]{Corollary}
\newtheorem{lem}[thm]{Lemma}
\newtheorem{prop}[thm]{Proposition}
\newtheorem{conj}[thm]{Conjecture}
\newtheorem{rmk}[thm]{Remark}
\theoremstyle{definition}
\newtheorem{defn}{Definition}[section]
\numberwithin{equation}{section}
\begin{document}
\maketitle

\begin{abstract}
Let $X$ be a toric variety and $u$ be a normalized symplectic potential of the corresponding polytope $P$. Suppose that the Riemannian curvature is bounded by $1$ and 
$
\int_{\partial P} u ~ d \sigma < C_1,
$
then there exists a constant $C_2$ depending only on $C_1$ and $P$ such that $\max_P u < C_2$. As an application, we show that if $(X,P)$ is analytic uniform $K$-stable, then the modified Calabi flow converges to an extremal metric exponentially fast by assuming that the Riemannian curvature is uniformly bounded along the Calabi flow. Also we provide a proof of a conjecture of Donaldson. Finally, assuming that the curvature is bounded along the Calabi flow, our method would provide a proof of a conjecture due to Apostolov, Calderbank, Gauduchon and T{\o}nnesen-Friedman.
\end{abstract}

\section{Introduction}
Let $X$ be a K\"ahler manifold with K\"ahler class $[\omega]$. The candidates of the canonical metrics in $[\omega]$ are extremal metrics in the sense of Calabi \cite{Ca2}. Calabi \cite{Ca1} also proposes an analytic method to search for his extremal metrics. This analytic method is usually called the Calabi flow whose equation is
$$
\frac{\partial \varphi}{\partial t} = R_\varphi - \underline{R},
$$
where $\varphi$ is a K\"ahler potential, $R_\varphi$ is the corresponding scalar curvature and $\underline{R}$ is the average of the scalar curvature.

It is conjectured by Chen \cite{ChenHe3} that the Calabi flow exists for all time. When $X$ is a toric variety, it is shown in \cite{H1} that the obstructions of the long time existence are the regularity theorem and the non-collapsing property of the Calabi flow. Later, Streets establishes the regularity theorem\footnote{Chen and He established the weaker regularity theorem in \cite{ChenHe2}} of the Calabi flow in \cite{St}. Recently, Chen's conjecture has been confirmed in \cite{FH} when $X = \mathbb{C}^2 / \mathbb{Z}^2 + i \mathbb{Z}^2$ and the initial metric is invariant under the translation of the imaginary part of the complex variables. 

In this note, we are interested in the long time behavior of the Calabi flow on a toric variety. In fact, it is expected that the long time behavior of the Calabi flow should be related to the stability of the manifold. 

When $(X, L)$ is $K$-unstable, assuming the long time existence of the Calabi flow on a toric variety $X$, the geometrical phenomenas are clear by the work of Sz\'ekelyhidi \cite{S1}. More explicitly, assuming the long time existence of the Calabi flow on a toric variety, he proves that the infimum of the Calabi energy is equal to the supremum of the normalized Futaki invariants of all destabilizing test configurations, confirming a conjecture of Donaldson \cite{D7}.

The remaining case is that when $(X, L)$ is $K$-stable, if we assume the Calabi flow exists for all time, what will happen? We recall that Yau \cite{Y1}, Tian \cite{Ti1} and Donaldson \cite{D1} conjecture that the $K$-stability of $(X, L)$ is equivalent to the existence of cscK metrics. Thus we should try to understand the long time behavior of the Calabi flow by assuming that there exists a cscK metric in the K\"ahler class. In fact Donaldson \cite{D6} conjectures that if there exists a cscK metric in the K\"ahler class and if the Calabi flow exists for all time, then it should converge to a cscK metric. The work in \cite{FH} confirms Donaldson's conjecture when $X=\mathbb{C}^n / \mathbb{Z}^n + i \mathbb{Z}^n$. This work also suggests that if there exists a cscK metric in the K\"ahler class, then the curvature along the Calabi flow should be uniformly bounded.  Thus we modify Donaldson's conjecture as in \cite{H2}.

\begin{conj}
\label{c1}
Let $X$ be a K\"ahler manifold with K\"ahler class $[\omega]$. Suppose that there exists an extremal metric $\omega_0 \in [\omega]$. Let $\omega_1  \in [\omega]$ be a K\"ahler metric  invariant under the maximal compact subgroup of the identity component of the reduced automorphism group. We further assume that the curvature along the Calabi flow starting from $\omega_1$  is uniformly bounded. Then the modified Calabi flow converges to an extremal metric exponentially fast.
\end{conj}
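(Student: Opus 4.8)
The plan is to treat the modified Calabi flow as the downward gradient flow of a real-analytic energy functional and to run the Łojasiewicz--Simon convergence scheme, using the curvature hypothesis to supply the compactness that makes the scheme work. Write $\mathcal{C}$ for the modified Calabi energy, i.e.\ the reduced Calabi functional whose critical points in the class $[\omega]$ are precisely the extremal metrics; the modified Calabi flow is its gradient flow, so that $\frac{d}{dt}\mathcal{C}(\omega(t)) = -\|\mathrm{grad}\,\mathcal{C}(\omega(t))\|^2 \le 0$. The existence of the extremal metric $\omega_0$ shows that $\mathcal{C}$ is bounded below along the flow and attains its infimum, and the invariance of $\omega_1$ under the maximal compact subgroup $K$ of the identity component of the reduced automorphism group is preserved by the flow, so throughout I may restrict to the Fr\'echet space of $K$-invariant K\"ahler potentials, on which the degeneracy of the linearization coming from holomorphic vector fields is cut down to the centralizer of $K$.

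First I would convert the curvature bound into $C^\infty$ control. By hypothesis the Riemann tensor is uniformly bounded along the flow; feeding this into Streets' regularity theorem and its higher-order parabolic bootstrap yields uniform bounds on all covariant derivatives of the curvature, hence, after fixing a gauge, uniform $C^\infty$ bounds on $\omega(t)$ together with long-time existence and precompactness of $\{\omega(t)\}$ in $C^\infty$. The energy dissipation identity then forces $\int_0^\infty \|\mathrm{grad}\,\mathcal{C}\|^2\,dt < \infty$, so $\|\mathrm{grad}\,\mathcal{C}(\omega(t))\| \to 0$ along some sequence $t_i \to \infty$, and precompactness lets me extract a subsequential $C^\infty$ limit $\omega_\infty$ which is a critical point of $\mathcal{C}$, that is, an extremal metric in $[\omega]$.

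Next I would upgrade subsequential convergence to genuine exponential convergence. Because $\mathcal{C}$ is real-analytic, I would establish a Łojasiewicz--Simon inequality centered at $\omega_\infty$: there exist $\sigma > 0$, $C > 0$ and $\theta \in (0, 1/2]$ such that $|\mathcal{C}(\omega) - \mathcal{C}(\omega_\infty)|^{1-\theta} \le C\,\|\mathrm{grad}\,\mathcal{C}(\omega)\|$ whenever $\omega$ is within distance $\sigma$ of $\omega_\infty$. The linearization of $\mathrm{grad}\,\mathcal{C}$ at an extremal metric is a modification of the fourth-order Lichnerowicz operator $\mathcal{D}^*\mathcal{D}$, which is elliptic, self-adjoint and Fredholm; its kernel consists exactly of the holomorphy potentials, and modding these out, equivalently working $K$-invariantly and transverse to the $\mathrm{Aut}_0$-orbit, produces a genuine spectral gap. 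This gap forces the optimal exponent $\theta = 1/2$, which in the gradient-flow iteration translates into exponential decay of $\|\mathrm{grad}\,\mathcal{C}\|$ and of the length of the flow path, hence exponential $C^\infty$ convergence of $\omega(t)$ to a single extremal metric.

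The hard part will be twofold. First, establishing the Łojasiewicz--Simon inequality rigorously requires a careful choice of Banach or Fr\'echet scales, for instance suitable H\"older or Sobolev completions of the space of $K$-invariant potentials, in which $\mathrm{grad}\,\mathcal{C}$ is analytic and its linearization is Fredholm, together with a clean identification of the finite-dimensional kernel with the holomorphy potentials. Second, and more delicate, is the gauge issue: the curvature bound only yields Cheeger--Gromov type subconvergence modulo diffeomorphisms, whereas convergence must ultimately take place for potentials in the fixed complex structure. I would need to show that the limiting complex structure is carried back to the original one by a diffeomorphism isotopic to the identity, and to absorb that diffeomorphism into the reduced automorphism group, so that the limit genuinely lies in $[\omega]$; here the $K$-invariance and the rigidity supplied by the extremal equation are essential. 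Uniqueness of extremal metrics up to $\mathrm{Aut}_0$ then guarantees that the single limit is, up to automorphism, the given $\omega_0$.
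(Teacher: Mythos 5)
There is a genuine gap, and it sits exactly where the paper's real work is done. In your second paragraph you claim that the uniform bound on the Riemann tensor, fed through Streets' regularity theorem, yields uniform $C^\infty$ bounds on $\omega(t)$ ``together with long-time existence and precompactness of $\{\omega(t)\}$ in $C^\infty$.'' That implication is false as stated: bounds on the curvature and all its covariant derivatives give only Cheeger--Gromov precompactness of $(X,\omega(t))$ modulo diffeomorphisms, while the K\"ahler potentials $\varphi(t)$ relative to the fixed background (equivalently, on a toric variety, the normalized symplectic potentials $u(t)$) can still diverge in $C^0$ with the curvature staying bounded. You concede this in your last paragraph as ``the gauge issue'' and list what you ``would need to show,'' but you supply no mechanism for showing it --- and this is precisely the missing idea, not a technicality. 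Note also that in your scheme the hypothesis that an extremal metric $\omega_0$ exists is used only to bound the energy below, which is essentially vacuous (the Calabi-type functionals are bounded below anyway); any successful proof must use the existence of $\omega_0$, or a stability consequence of it, in an essential quantitative way, since a curvature bound alone is compatible with the flow drifting to infinity in the space of potentials.

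Compare with how the paper closes this gap, in the toric case (the only case it actually proves). By Chen--Li--Sheng \cite{CLS}, the existence of an extremal metric implies that $(X,P)$ is analytic uniform $K$-stable; uniform stability gives, via Propositions 5.1.2 and 5.1.8 of \cite{D1}, a uniform bound on $\int_{\partial P} \tilde{u}(t)\, d\sigma$ for the normalized symplectic potentials along the flow. The $L^\infty$ estimate of Section 3 (Theorem \ref{dia}) then shows that this boundary integral bound \emph{combined with} the curvature bound forces a uniform bound on $\max_P \tilde{u}(t)$; Donaldson's identity $\|\varphi\|_{L^\infty} = \|u - u_0\|_{L^\infty}$ \cite{D4} and Theorem 5.1 of \cite{ChenHe} convert this into $C^{3,\alpha}$ compactness in the \emph{fixed} complex structure, with no gauge ambiguity, after which the smoothing property of the flow applies. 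A further argument, using monotonicity of the geodesic distance under the flow \cite{CC}, is needed to control the affine normalizations $l^{(\alpha)}$ and conclude that the original flow (not merely renormalized time-translates) subconverges to an extremal metric. Only the final step of your plan --- exponential convergence once the flow enters a neighborhood of an extremal metric --- parallels the paper, which invokes the stability theorem of \cite{HZ} where you propose a {\L}ojasiewicz--Simon inequality; that step is indeed available, but it cannot be reached without the compactness input that your argument lacks.
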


Combining with the Yau-Tian-Donaldson conjecture and the recent developments in \cite{ACGT}, \cite{D5} and \cite{S2}, we have the following conjecture in \cite{H2} when $X$ is a toric variety.

\begin{conj}
\label{c2}
Let $X$ be a toric variety with an ample line bundle $L$. Suppose that $(X, L)$ is relative $K$-stable and the curvature along the Calabi flow starting from an toric invariant metric $\omega \in c_1(L)$ is uniformly bounded. Then the modified Calabi flow converges to an extremal metric exponentially fast.
\end{conj}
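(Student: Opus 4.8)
The plan is to deduce Conjecture~\ref{c2} from the application result announced in the abstract, which already establishes exponential convergence of the modified Calabi flow on a toric variety that is \emph{analytic uniform $K$-stable}, granted the standing hypothesis of uniformly bounded curvature. Hence the entire task reduces to proving that, for a toric variety $(X,L)$, relative $K$-stability implies analytic uniform $K$-stability. Throughout I would pass to the symplectic picture: a toric invariant metric $\omega \in c_1(L)$ is encoded by its normalized symplectic potential $u$ on the moment polytope $P$, and the modified Calabi flow becomes a flow of convex functions $u_t$ on $P$ governed by the relative (modified) Mabuchi functional.

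First I would record the modified Mabuchi functional in Donaldson's form,
$$
\mathcal{M}(u) = -\int_P \log\det(\mathrm{Hess}\, u)\, dx + \mathcal{L}_A(u),
\qquad
\mathcal{L}_A(u) = \int_{\partial P} u \, d\sigma - \int_P A\, u \, dx,
$$
where $A$ is the extremal affine function attached to $P$. In this language relative $K$-stability is the assertion that $\mathcal{L}_A(f) > 0$ for every normalized non-affine convex $f$, whereas analytic uniform $K$-stability is the coercive strengthening $\mathcal{L}_A(f) \ge \lambda \int_{\partial P} f\, d\sigma$ for a single constant $\lambda > 0$ and all normalized convex $f$. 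The crux is therefore to upgrade strict positivity of $\mathcal{L}_A$ to this uniform lower bound.

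For that upgrade I would invoke the circle of results cited in the introduction: the convexity and lower-semicontinuity analysis of the linear functional $\mathcal{L}_A$ on the cone of normalized convex functions developed by Donaldson \cite{D5} and Sz\'ekelyhidi \cite{S2}, together with the explicit extremal computations on admissible and fibred manifolds of \cite{ACGT}. These allow one to reduce testing to rational piecewise-linear (simple toric) degenerations, to control the behaviour of $\mathcal{L}_A$ under normalization, and to extract the uniform constant $\lambda$. With analytic uniform $K$-stability in hand, the monotonicity of $\mathcal{M}$ along the modified Calabi flow bounds $\mathcal{L}_A(u_t)$, and hence $\int_{\partial P} u_t\, d\sigma$, uniformly in $t$ once the entropy term $\int_P \log\det(\mathrm{Hess}\, u_t)\, dx$ is controlled from above using the bounded-curvature hypothesis. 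The main theorem of the paper then converts this boundary bound into a uniform estimate $\max_P u_t < C_2$; bootstrapping from bounded curvature gives uniform higher-order bounds, and a \L{}ojasiewicz--Simon inequality near the extremal potential promotes convergence to the exponential rate, exactly as in the analytic uniform $K$-stable case.

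The principal obstacle is the second step. Relative $K$-stability is customarily tested only against a restricted class of toric, rational, piecewise-linear degenerations, while analytic uniform $K$-stability is a coercivity statement over \emph{all} normalized convex functions on $P$; closing this gap for an arbitrary polytope is precisely what is not yet available in full generality, which is why the assertion is recorded as a conjecture rather than a theorem. A genuine secondary difficulty is the uniform upper bound on the entropy term along the flow, needed to turn the energy bound into the boundary-integral bound demanded by the main theorem; here the bounded-curvature hypothesis is doing essential work, and making this quantitative is itself a nontrivial analytic task.
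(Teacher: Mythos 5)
You should note at the outset that the statement you are asked to prove is recorded in the paper as Conjecture~\ref{c2}, and the paper itself does \emph{not} prove it. What the paper proves is Theorem~\ref{main}, which replaces the hypothesis of relative $K$-stability by the strictly stronger hypothesis of \emph{analytic uniform} $K$-stability, and Conjecture~\ref{c1}, which follows from Theorem~\ref{main} because the existence of an extremal metric implies analytic uniform stability by Chen--Li--Sheng \cite{CLS}. Your overall strategy --- reduce Conjecture~\ref{c2} to Theorem~\ref{main} by upgrading relative $K$-stability to analytic uniform $K$-stability --- is exactly the reduction the paper's structure suggests, and you are honest that this upgrade is the missing step. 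But that step is a genuine gap, and the references you invoke to close it do not do so: \cite{D5} concerns b-stability and blow-ups, \cite{S2} concerns filtrations and test-configurations, and \cite{ACGT} concerns Hamiltonian 2-forms and extremal metrics on specific fibred manifolds. None of them contains a result of the form ``relative $K$-stability (tested on rational piecewise-linear functions) implies a coercive bound $\mathcal{L}(f)\geq\lambda\int_{\partial P}f\,d\sigma$ over all normalized convex $f$'' for a general Delzant polytope. The only implication of this kind available in the literature cited by the paper runs through the \emph{existence of an extremal metric} (\cite{CLS}), which is precisely what one is trying to produce; using it here would be circular. So your proposal is a conditional reduction with an open hypothesis, not a proof, and no rearrangement of the cited tools repairs this.

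A secondary, more technical inaccuracy: you state that the bounded-curvature hypothesis is needed to control the entropy term $\int_P\log\det(\mathrm{Hess}\,u_t)\,d\mu$ from above in order to convert the Mabuchi energy bound into the boundary-integral bound $\int_{\partial P}\tilde u(t)\,d\sigma<C_1$. In the paper's proof of Theorem~\ref{main} this step uses no curvature at all: it follows from Proposition 5.1.2 and Proposition 5.1.8 of \cite{D1}, which under analytic uniform stability give the lower bound on the modified Mabuchi energy and the boundary-integral bound directly from monotonicity of the energy along the flow. The curvature bound enters elsewhere, namely in Theorem~\ref{dia} (converting the boundary bound into $\max_P u<C$), in the Chen--He regularity theorem giving $C^{3,\alpha}$ control and metric equivalence, and in the smoothing/compactness argument producing the limiting extremal flow. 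Misplacing where the curvature hypothesis does its work would matter if you attempted to make the argument quantitative.
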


We can write down the $K$-stability condition explicitly when $X$ is a toric variety. Let $P$ be the Delzant polytope corresponding to $(X,L)$ through moment map. We also define the extremal function $\theta$ which is an affine function satisfying the following equation
\begin{eqnarray}
\label{extremal function}
\mathcal{L}(u) = 2 \int_{\partial P} u ~ d \sigma - \int_P u \theta ~ d \mu = 0
\end{eqnarray}
for all affine function $u$. The relative $K$-stability can be interpreted as in \cite{D1}:
\begin{defn}
$(X, L)$ is relative $K$-stable if 
$$
\mathcal{L} (f) \geq 0
$$
for all rational piecewise linear function $f$ and the equality holds if and only if $f$ is an affine function.
\end{defn}

There is another stability condition introduced by Sz\'ekelyhidi \cite{S3}. Let $\mathcal{C}_\infty$ be the set of continuous convex functions on $\bar{P}$ which are smooth in the interior. Let us fix a point $x_0 \in P$. For any $u \in \mathcal{C}_\infty$, we say $u$ is normalized if $u(x_0) = 0$ and $D u(x_0) = 0$. The uniform stability of Sz\'ekelyhidi states as follows:

\begin{defn} 
$(X, L)$ is uniform $K$-stable if there exists a constant $\lambda > 0$ such that for any normalized function $u \in \mathcal{C}_\infty$, we have
$$
\mathcal{L}(u) \geq \lambda ||u||_{L^{\frac{n}{n-1}}}.
$$
\end{defn}

For the purpose of analysis, we will use the analytic version of uniform stability from \cite{D1}. 

\begin{defn}
$(X, P)$ is analytic uniform $K$-stable if there exists a constant $\lambda > 0 $ such that for any normalized $u \in \mathcal{C}_\infty$, we have
$$
\mathcal{L}(u) \geq \lambda \int_{\partial P} u ~ d \sigma.
$$
\end{defn}

When the complex dimension of $X$ is one, Conjecture (\ref{c2}) is proved by Chen \cite{Ch1} and Chen-Zhu \cite{CZ}. When $X$ is a toric surface, Conjecture (\ref{c2}) is proved in \cite{H2} if $(X, P)$ is analytic relative $K$-stable. In this note, we extend the above mentioned results by removing the dimensional constraints. Our main theorem states as follows:

\begin{thm}
\label{main}
Let $(X,P)$ be analytic uniform $K$-stable. Suppose that the curvature is uniformly bounded along the Calabi flow starting from a toric invariant metric in the K\"ahler class of $(X,P)$. Then the modified Calabi flow converges to an extremal metric exponentially fast.
\end{thm}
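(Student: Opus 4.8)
The plan is to combine the a priori $C^0$ estimate stated in the abstract with the monotonicity of the relative Mabuchi energy along the flow, and then to linearize near an extremal limit to extract exponential decay. I work throughout in symplectic (action-angle) coordinates on the polytope $P$, where the scalar curvature is given by the Abreu formula $R(u) = -\frac{1}{2}\sum_{i,j}\partial_i\partial_j u^{ij}$ with $(u^{ij})$ the inverse of the Hessian of the symplectic potential $u$, and the modified Calabi flow becomes $\dot{u} = -(R(u) - \theta)$. The relevant energy is Donaldson's expression for the relative Mabuchi functional, $\mathcal{M}(u) = -\int_P \log\det(\mathrm{Hess}\, u)\, d\mu + \mathcal{L}(u)$, and a direct computation gives $\frac{d}{dt}\mathcal{M}(u(t)) = -\int_P (R - \theta)^2\, d\mu \le 0$, so $\mathcal{M}$ is non-increasing and the modified Calabi energy is integrable in time.

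First I would establish uniform $C^0$ control of the symplectic potential $u(t)$ (normalized at each time by subtracting an affine function, which does not change the metric). Analytic uniform $K$-stability gives $\mathcal{L}(u) \ge \lambda \int_{\partial P} u\, d\sigma$ for every normalized $u \in \mathcal{C}_\infty$, and the standard toric properness argument upgrades this to a coercivity estimate $\mathcal{M}(u) \ge \delta \int_{\partial P} u\, d\sigma - C$ for the relative Mabuchi functional, the delicate entropy term $-\int_P \log\det(\mathrm{Hess}\, u)\, d\mu$ being absorbed into the linear term. Combined with the monotonicity $\mathcal{M}(u(t)) \le \mathcal{M}(u(0))$ from the previous paragraph, this bounds $\int_{\partial P} u(t)\, d\sigma$ uniformly in $t$. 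At this point the hypothesis that the Riemannian curvature is uniformly bounded lets me invoke the main a priori estimate of the abstract (with the given uniform bound in place of $1$), yielding $\max_P u(t) \le C_2$ with $C_2$ depending only on the uniform bound on $\int_{\partial P} u\, d\sigma$ and on $P$. This is the key step, and it is exactly where the curvature hypothesis is consumed.

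With uniform $C^0$ bounds on the symplectic potential and a uniform curvature bound in hand, I would bootstrap to uniform $C^\infty$ estimates using Streets' regularity theorem for the Calabi flow, obtaining subconvergence: for any sequence $t_k \to \infty$ a subsequence of $u(t_k)$ converges smoothly to a limit $u_\infty$. Because $\int_0^\infty \int_P (R-\theta)^2\, d\mu\, dt < \infty$, the sequence can be chosen so that $\int_P (R-\theta)^2\, d\mu \to 0$ along it; hence $u_\infty$ solves $R(u_\infty) = \theta$, i.e. $u_\infty$ is an extremal symplectic potential.

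The final and hardest step is to upgrade subconvergence to exponential convergence of the entire flow. I would linearize the modified Calabi flow at the extremal limit $u_\infty$: the linearized operator is the self-adjoint, non-negative Lichnerowicz operator $\mathcal{D}^*\mathcal{D}$, whose kernel consists of the affine functions on $P$, quotiented out by the normalization and by the restriction to toric-invariant metrics. The expected mechanism is that analytic uniform $K$-stability forces a uniform spectral gap $\mathcal{D}^*\mathcal{D} \ge \lambda_1 > 0$ on the orthogonal complement of this kernel, so that near $u_\infty$ one obtains the differential inequality $\frac{d}{dt}\int_P (R-\theta)^2\, d\mu \le -2\lambda_1 \int_P (R-\theta)^2\, d\mu$, giving exponential decay of the modified Calabi energy and, after interpolation with the higher-order bounds, exponential convergence $u(t) \to u_\infty$. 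The main obstacle I anticipate is not the linearized decay itself but controlling the nonlinear error: one must first prove that the Hessian of $u(t)$ stays uniformly bounded below (a non-degeneracy/non-collapsing statement ensuring the metrics remain uniformly equivalent), and then ensure that the flow enters and stays inside a fixed $C^\infty$-neighborhood of a single extremal metric rather than drifting between distinct subsequential limits---this last point requires either a {\L}ojasiewicz--Simon inequality for $\mathcal{M}$ or the uniqueness of extremal metrics modulo automorphisms to pin down the limit.
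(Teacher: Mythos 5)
Your first three paragraphs track the paper's own argument closely: monotonicity of the modified Mabuchi energy, Donaldson's coercivity under uniform stability (Propositions 5.1.2 and 5.1.8 of \cite{D1}) giving the uniform bound on $\int_{\partial P}\tilde u(t)\,d\sigma$, the $L^\infty$ estimate of Theorem \ref{dia} consuming the curvature hypothesis, and regularity plus compactness producing subsequential extremal limits. Two smaller discrepancies in that portion: the paper obtains metric equivalence and $C^{3,\alpha}$ bounds from Theorem 5.1 of \cite{ChenHe}, which needs exactly the ingredients you already have ($C^0$ bound on the potential plus the curvature bound), followed by the smoothing Theorem 3.3 of \cite{ChenHe}; this is what disposes of your anticipated ``Hessian bounded below'' obstacle, whereas Streets' regularity theorem by itself does not give equivalence to a fixed background metric.

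The genuine gap is in your final step, and you half-acknowledge it yourself. All of your $C^0$ control comes from the estimate applied to \emph{normalized} potentials, i.e.\ after subtracting a time-dependent affine function; subtracting affine functions leaves the metric on $P\times T^n$ unchanged, but it moves the K\"ahler potential on $X$ by a (possibly unbounded) torus automorphism, so subconvergence of the normalized potentials does not yet imply that the flow itself accumulates on any extremal metric --- it could drift to infinity along the automorphism orbit, with all subsequential limits isometric but escaping. Neither of your proposed remedies closes this: uniqueness of extremal metrics modulo automorphisms is of no help precisely because the drift is \emph{along} automorphisms, and a {\L}ojasiewicz--Simon inequality is invoked rather than supplied. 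The paper's missing ingredient here is concrete: by Calabi--Chen \cite{CC}, the geodesic distance to a fixed potential is non-increasing under the (modified) Calabi flow, which bounds $\int_P |u(t)|^2\,d\mu$ for all $t$ and hence bounds the affine corrections $l^{(\alpha)}$; this upgrades the conclusion to ``$\varphi(t_i)$ itself converges to an extremal metric,'' after which the local stability theorem of Huang--Zheng \cite{HZ} near an extremal metric (which is exactly the role your linearization/spectral-gap paragraph is meant to play, and which does not require deducing a spectral gap from $K$-stability) yields exponential convergence of the entire flow. Without the distance-monotonicity argument, or some substitute controlling the affine drift, your linearization has no fixed extremal metric to linearize at, and the proof does not close.
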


\begin{rmk}
The existence of extremal metrics in a toric variety implies that $(X, P)$ is analytic uniform stable by the work of Chen-Li-Sheng \cite{CLS}. Thus we also prove Conjecture (\ref{c1}). 
\end{rmk}

\begin{rmk}
Tosatti has a related result in K\"ahler Ricci flow \cite{T1}.
\end{rmk}

Assuming the curvature of the Calabi flow is uniformly bounded, our method also leads us to provide a proof of a conjecture due to Apostolov, Calderbank, Gauduchon and T{\o}nnesen-Friedman \cite{ACGT2}. The conjecture states as follows:

\begin{conj}[\cite{ACGT2}]
\label{cos}
A projective bundle $(X, J) = P(E)$ over a compact curve $\Sigma$ of genus $\geq 2$ admits an extremal metric in some K\"ahler class if and only if $E$ decomposes as
$$
E = \bigoplus_{i=0}^l E_i,
$$
where $E_i$ is a stable subbundle, $1 \leq i \leq l$.
\end{conj}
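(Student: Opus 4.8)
The plan is to reduce Conjecture~\ref{cos} to a convex-analytic problem of exactly the type governed by \thmref{main}, through the generalized Calabi construction of Apostolov--Calderbank--Gauduchon--T{\o}nnesen-Friedman. A decomposition $E=\bigoplus_{i=0}^l E_i$ into stable subbundles is precisely the structure that lets one build \emph{admissible} K\"ahler metrics on $P(E)$: by the Narasimhan--Seshadri theorem each stable factor $E_i$ carries a projectively flat unitary connection, so the fibrewise scaling action of the torus $T=(S^1)^{l+1}/S^1\cong(S^1)^l$ on $P(\bigoplus_i E_i)$ is Hamiltonian with momentum image an $l$-dimensional simplex $\Delta=\Delta^l$. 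Over $\Delta$ the manifold $P(E)$ acquires a generalized Calabi fibration whose fibres carry the fixed cscK metrics on $\Sigma$ and on the $P(E_i)$, and a $T$-invariant admissible metric is encoded by a convex symplectic potential $u$ on $\Delta$. As in the toric case, the extremal condition becomes a fourth-order Abreu-type equation for $u$, in which the hyperbolic metric of $\Sigma$ and the Hermitian--Einstein constants of the $E_i$ enter only through an explicit positive weight $w$ on $\Delta$ and a boundary measure on $\partial\Delta$.

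First I would make this dictionary precise and record the weighted functional
\begin{equation*}
\mathcal{L}_w(u)=2\int_{\partial\Delta}u\,w\,d\sigma-\int_\Delta u\,\theta_w\,w\,d\mu,
\end{equation*}
together with the corresponding notion of weighted analytic uniform $K$-stability. The essential computation is to determine the weight $w$ coming from the base of genus $\geq 2$ and to show that its sign is such that the weighted stability of $(\Delta,w)$ is governed exactly by slope stability of the factors: a destabilizing rational piecewise-linear function on $\Delta$ should correspond to a proper saturated subsheaf of some $E_i$ violating its slope inequality. This would identify weighted analytic uniform $K$-stability of $(\Delta,w)$ with the condition that every $E_i$ be stable.

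Next I would run the modified Calabi flow on $P(E)$ from an admissible initial metric. Since admissibility is preserved and the flow descends to a weighted Abreu flow for $u$ on $\Delta$, the proof of \thmref{main} applies once its two ingredients are re-established with the weight present: the a priori bound $\max_\Delta u<C_2$ in terms of $\int_{\partial\Delta}u\,d\sigma$ from the abstract, and the exponential decay of the weighted Calabi energy under weighted analytic uniform $K$-stability. Granting the standing hypothesis that the Riemannian curvature remains uniformly bounded along the flow, this gives exponential convergence to an admissible extremal metric, proving the ``if'' direction. For the converse I would use that the existence of an extremal metric on $P(E)$ forces relative $K$-semistability, hence $\mathcal{L}_w\geq0$ on convex functions; running the dictionary backwards then excludes any destabilizing subbundle and forces the Harder--Narasimhan--Seshadri data of $E$ to split into stable summands.

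The hard part will be the weight computation together with its sign control: showing that the positivity produced by genus $\geq 2$ is exactly what promotes ``$\mathcal{L}_w\geq0$'' to the \emph{strict} uniform inequality required by the convergence theorem, and conversely that strict stability of $(\Delta,w)$ is equivalent to, not merely implied by, stability of the $E_i$. A second difficulty is transferring the regularity and curvature-bounded convergence machinery from the toric to this setting, since the fibres are now products of projective spaces over a higher-genus base rather than a single toric variety; I expect this to require re-deriving the relevant interior estimates in the presence of the weight $w$, rather than quoting the toric statements directly.
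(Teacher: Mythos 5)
Your proposal takes a genuinely different route from the paper, and it contains a circularity that would sink the ``only if'' direction. The generalized Calabi construction you invoke --- the momentum simplex $\Delta$, the weight $w$, the reduction of the extremal equation to a weighted Abreu equation --- only exists once each factor $E_i$ carries a projectively-flat hermitian structure, which by Narasimhan--Seshadri (equivalently Kobayashi--Hitchin) is available precisely when the $E_i$ are polystable. In the ``only if'' direction you are given nothing but an extremal metric on $P(E)$: the decomposition into indecomposable factors always exists, but those factors need not be polystable, so there is no fibration over $\Delta$, no weight $w$, and no dictionary to ``run backwards''. Your further claim that $\mathcal{L}_w \geq 0$ excludes destabilizing subsheaves also needs the strict/equality-case analysis you postpone, and the proposed equivalence ``weighted analytic uniform $K$-stability of $(\Delta,w)$ $\Leftrightarrow$ each $E_i$ stable'' is an unproven statement essentially as hard as the conjecture itself. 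This circularity is exactly why \cite{ACGT2} carry out the bundle-theoretic work themselves and reduce the whole conjecture to a single analytic statement: if a \emph{compatible} class $\Omega$ admits an extremal metric, then it admits a \emph{compatible} extremal metric. The paper proves the conjecture (always modulo the standing curvature-bound hypothesis) by quoting this reduction and then establishing precisely that statement, which is Theorem (\ref{proof of cos}).

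Second, your weighted-stability machinery replaces the paper's key mechanism by a much harder open problem, and it is not needed. In Theorem (\ref{proof of cos}) the hypothesis already supplies an extremal metric $\omega_0 \in \Omega$, and the paper never converts this into any stability statement. Instead it uses the theorem of Calabi and Chen \cite{CC} that the Calabi flow decreases geodesic distance: $d(\varphi_0,\varphi(t))$ is non-increasing, which yields the uniform bound $\int_P u(t)^2 \, p(z)\, d\mu < C$; then Proposition (\ref{ib}) gives interior estimates, the totally-geodesic fibre argument (Proposition 8 of \cite{ACGT1}) lets the curvature bound on $X$ descend to the fibre polytope, and the $L^\infty$ argument of Section 3 gives $\|u(t)\|_{L^\infty} < C$. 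After that, Donaldson's identity \cite{D4}, the Chen--He regularity and smoothing theorems \cite{ChenHe}, and the Huang--Zheng stability theorem \cite{HZ} give exponential convergence to a compatible extremal metric. So the role that analytic uniform $K$-stability plays in Theorem (\ref{main}) --- producing the bound $\int_{\partial P} u \, d\sigma < C_1$ via Donaldson's Propositions 5.1.2 and 5.1.8 --- is played here by the distance-decreasing property relative to $\omega_0$; no weighted analogue of uniform stability, and no equivalence with slope stability of the $E_i$, has to be established. Per the reduction in \cite{ACGT2}, both directions of the conjecture follow once this compatible-class statement is proved; the bundle-theoretic side relating stability of the $E_i$ to existence is done in \cite{ACGT2} and does not need to be re-derived as a polytope dictionary.
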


When $l=1$, the conjecture is proved in \cite{ACGT}. In \cite{ACGT2}, the authors show that to prove Conjecture (\ref{cos}), we only need to show that if $\Omega$ is a compatible class on $X$, then the existence of an extremal K\"ahler metric in $\Omega$ implies the existence of a compatible extremal K\"ahler metric in $\Omega$. Our following theorem suggests a way to prove Conjecture (\ref{cos}).

\begin{thm}
\label{proof of cos}
Suppose there exists an extremal metric $\omega_0 \in \Omega$. Let $\omega \in \Omega$ be a compatible K\"ahler metric. If the curvature is uniformly bounded along the Calabi flow starting from $\omega$, then the modified Calabi flow converges to a compatible extremal metric $\omega_1 \in \Omega$ exponentially fast.
\end{thm}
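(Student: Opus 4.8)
The plan is to reduce the compatible Calabi flow on $P(E)$ to a flow of symplectic potentials on a simplex and then run the same argument that proves \thmref{main}. First I would invoke the Apostolov--Calderbank--Gauduchon--T{\o}nnesen-Friedman parametrization of compatible metrics: because $E = \bigoplus_{i=0}^l E_i$ splits, the bundle $P(E)$ carries a fiberwise torus action, and a compatible Kähler metric in $\Omega$ is invariant under this torus together with the isometries of the constant negative curvature base $\Sigma$ (here genus $\geq 2$ is used). Such a metric is determined by a symplectic potential $u$ on the fiber simplex $\Delta$, weighted by data read off from the degrees of the $E_i$ and the geometry of $\Sigma$. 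Under this correspondence the compatible extremal equation becomes a \emph{weighted} Abreu equation on $\Delta$, and the governing functional takes the form of a weighted $\mathcal{L}$, with the same boundary term $\int_{\partial \Delta} u \, d\sigma$ up to multiplication by a positive weight that is bounded above and below away from zero.

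Second, I would verify that the Calabi flow preserves compatibility. Since the initial metric $\omega$ and the extremal metric $\omega_0$ (which may be taken compatible after the reduction) are invariant under the same compact group, and the Calabi flow is equivariant under biholomorphisms, the flow remains within the class of compatible metrics for all time. Hence it descends to a fourth-order parabolic flow for the symplectic potential $u(t)$ on $\Delta$, exactly analogous to the toric Calabi flow; the \emph{modified} flow corresponds to subtracting the weighted extremal affine function $\theta$ defined by the analog of equation (\ref{extremal function}).

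Third, I would establish analytic uniform $K$-stability in this weighted setting. The existence of the extremal metric $\omega_0 \in \Omega$, via the weighted analog of the Chen--Li--Sheng theorem, yields a constant $\lambda > 0$ with $\mathcal{L}(u) \geq \lambda \int_{\partial \Delta} u \, d\sigma$ for all normalized $u \in \mathcal{C}_\infty$. With this in hand, the monotonicity of the modified Calabi energy along the flow, combined with the uniform curvature assumption, lets me control $\int_{\partial \Delta} u(t) \, d\sigma$ and then apply the a priori potential estimate to bound $\max_\Delta u(t)$ uniformly in $t$. This $C^0$ bound bootstraps to higher-order bounds on $u(t)$, yielding subsequential convergence of the flow.

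The main obstacle is extending the a priori estimate of the abstract (the implication $\int_{\partial P} u \, d\sigma < C_1 \Rightarrow \max_P u < C_2$ under a curvature bound) from the toric case to the weighted Abreu equation on $\Delta$. One must check that the positive weight functions do not spoil the convexity arguments, the integration-by-parts identities defining $\mathcal{L}$, or the non-collapsing input to the maximum bound; this should follow because the weights are smooth and uniformly positive on $\bar{\Delta}$, so they only perturb the constants. Once this weighted estimate is secured, the final step of upgrading subconvergence to exponential convergence proceeds as in \thmref{main}, through a {\L}ojasiewicz--Simon inequality near the extremal symplectic potential (equivalently, the spectral gap of the linearized weighted operator), using that the extremal metric is the unique minimizer of the Calabi energy among compatible metrics in $\Omega$.
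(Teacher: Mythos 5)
Your overall skeleton (reduce to a weighted Abreu equation on the fiber simplex, get a uniform potential bound, bootstrap, conclude exponential convergence by a stability-of-the-flow result near the extremal potential) matches the paper's, but the step you rest everything on is circular. In your third step you obtain weighted uniform $K$-stability from the existence of $\omega_0$ ``via the weighted analog of the Chen--Li--Sheng theorem,'' having remarked that $\omega_0$ ``may be taken compatible after the reduction.'' That remark is exactly the content of Conjecture (\ref{cos}) as reformulated in \cite{ACGT2}: the entire difficulty is that an extremal metric in a compatible class is \emph{not} known to be compatible. Calabi's theorem only makes $\omega_0$ invariant under a maximal compact subgroup, and invariance does not give the rigid, semisimple, block-diagonal form (\ref{cm}); so there is no compatible extremal potential on the simplex to feed into a Chen--Li--Sheng type argument, and no weighted version of \cite{CLS} exists to cite in any case. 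Consequently your constant $\lambda$, and with it your control of $\int_{\partial \Delta} u(t)\, d\sigma$, is unsupported. The same confusion of ``invariant'' with ``compatible'' weakens your second step: equivariance of the flow keeps the metrics invariant, but the correct reason the flow stays compatible is that one solves the Calabi flow within the admissible ansatz, as a flow of symplectic potentials on the polytope (the weighted Abreu scalar curvature depends only on $u$), and identifies it with the flow on $X$.

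The paper's proof needs no stability at all, which is how it escapes the circle. It uses $\omega_0$ only as a reference point in the space of K\"ahler potentials: by Calabi--Chen \cite{CC} the (modified) Calabi flow decreases geodesic distance, so $d(u(0),u(t)) \le d(0,\varphi_0) + d(\varphi_0,\varphi(t))$ is bounded, giving $\int_P u(t)^2\, p(z)\, d\mu < C$ for all $t$ --- and this works whether or not $\omega_0$ is compatible. The $L^2$ bound yields interior estimates on $u(t)$ and $Du(t)$ (Proposition (\ref{ib}), via Proposition 3.4 of \cite{FH}). The second ingredient you are missing is how the curvature hypothesis on $X$ descends to the polytope: the paper invokes Proposition 8 of \cite{ACGT1}, that each toric leaf $V=\mathbb{C}P^l$ is totally geodesic in $X$, so the bound on $|Rm|$ of $X$ bounds the quantity $\sum_{i,j,k,l} u^{ij}_{~kl}u^{kl}_{~ij}$ that the proof of Theorem (\ref{dia}) actually uses; your remark that the weights ``only perturb the constants'' does not supply this geometric fact. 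With those two inputs the argument of Theorem (\ref{dia}) gives $|u(t)|_{L^\infty} < C$, Donaldson's identity converts this to $|\varphi(t)|_{L^\infty} < C$, the Chen--He regularity and smoothing theorems \cite{ChenHe} give uniform $C^\infty$ control, and Corollary 2.2 and Theorem 1.5 of \cite{HZ} give exponential convergence; compatibility of the limit is automatic since the whole flow lives in the admissible ansatz.
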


\begin{rmk}
Theorem (\ref{proof of cos}) can be extended to other situations:

\begin{enumerate}
\item For any manifold $X$ which is a rigid toric bundle over a semisimple cscK base in \cite{ACGT2}. 

\item For any manifold $X$ which is multiplicity-free in the sense of Donaldson \cite{D8} and Raza \cite{Ra}.
\end{enumerate}

\end{rmk}

{\bf Acknowledgment: } The author would like to thank Vestislav Apostolov and  G\'abor Sz\'ekelyhidi for many stimulating discussions. He is also grateful to the consistent support of Professor Xiuxiong Chen, Pengfei Guan and Paul Gauduchon. He would like to thank Si Li, Jeff Streets and Valentino Tosatti for their interests in this work.

\section{Notations and Setup}

Let $X$ be a $n$-dimensional toric variety and $[\omega]$ be a K\"ahler class of $X$. Then we obtain the corresponding Delzant polytope $P$ through the moment map. The Delzant conditions show that

\begin{itemize}

\item For every facet $P_i$ of $P$, there exists an inward normal vector $\vec{n}_i$ corresponding to $P_i$.

\item For each vertex $v$ of $P$, there are exactly $n$ facets $P_{i_1}, \ldots, P_{i_n}$ meeting at $v$. Moreover, $\vec{n}_{i_1}, \ldots, \vec{n}_{i_n}$ form a basis of $\mathbb{Z}^n$.
\end{itemize} 

Suppose $P$ has $d$ facets. Let $l_i(x) = \langle x, \vec{n}_i \rangle + c_i, ~ i = 1, \ldots, d$ and we choose $c_i$ properly such that $l_i(x) = 0$ for all $x \in P_i$. The Guillemin boundary conditions show that for any toric invariant K\"ahler metric, its symplectic potential $u$ satisfies
\begin{itemize}
\item $u$ is a smooth, strictly convex function in $P$.

\item The restriction of $u$ to each face of $\partial P$ is smooth and strictly convex. 

\item
$$
u(x) = \frac{1}{2} \sum_{i=1}^d  l_i(x) \ln l_i(x) + f(x), ~ x \in P,
$$
where $f(x)$ is a smooth function on $\bar{P}$. 
\end{itemize}
The scalar curvature of $u$ on $P$ is the Abreu's equation
$$
R_u(x) = - \sum_{i j} u^{ij}_{~ij}(x), ~ x \in P.
$$

The Calabi flow equation on the symplectic side reads
$$
\frac{\partial u}{\partial t} = \underline{R} - R_u.
$$
Following \cite{HZ}, the modified Calabi flow equation on the symplectic side reads
$$
\frac{\partial u}{\partial t} = \theta - R_u,
$$
where $\theta$ is an affine function satisfying Equation (\ref{extremal function}).

The modified Mabuchi energy defined in \cite{D1} is 
\begin{eqnarray*}
\mathcal{M} (u) & = & -\int_P \log \det(u_{ij})~ d \mu + \mathcal{L}(u)\\
&=& -\int_P \log \det(u_{ij})~ d \mu + 2 \int_{\partial P} u ~ d \sigma - \int_P u \theta ~ d \mu,
\end{eqnarray*}
where $d \mu$ is the standard Lebesgue measure and $d \sigma$ is the standard Lebesgue measure divided by $|\vec{n}_i|$ for each facet $P_i$. It is known that the modified Calabi flow is the downward gradient flow of the modified Mabuchi energy (See e.g. \cite{H2}).

\section{$L^\infty$ Control}

In this section, we will prove

\begin{thm}
\label{dia}
Suppose $u$ is a normalized symplectic potential such that the Riemannian curvature is bounded by $1$ and there exists a constant $C_1$ such that
$$
\int_{\partial P} u ~ d \sigma < C_1.
$$
Then there exists a constant $C(C_1, P)$  depending only on $C_1$ and $P$ such that $\max_P u < C$.
\end{thm}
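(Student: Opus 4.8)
The plan is to reduce the estimate to a single vertex of $P$ and then use the curvature hypothesis to convert a pointwise bound into an integral bound over $\partial P$. First I would record the consequences of the normalization: since $u$ is convex with $u(x_0)=0$ and $Du(x_0)=0$, the point $x_0$ is the global minimum, so $u\ge 0$ on $\bar P$; and since a convex function on a compact convex set attains its maximum at an extreme point, there is a vertex $v^*$ with $\max_{\bar P}u=u(v^*)=:M$. It therefore suffices to bound $M$. Writing $e=v^*-x_0$ and $\gamma(s)=x_0+se$, the Taylor remainder formula together with the normalization gives
\begin{equation}
\label{radial}
M=\int_0^1 (1-s)\,u_{ij}(\gamma(s))\,e^i e^j\,ds,
\end{equation}
which exhibits $M$ as a quantity depending only on the Hessian $(u_{ij})$ along the segment. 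This is consistent with the affine gauge freedom $u\mapsto u+(\text{affine})$, which changes neither the metric nor, once the normalization is imposed, the value $M$.

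The difficulty is that convexity alone gives no lower bound for $\int_{\partial P}u\,d\sigma$ in terms of $M$: a convex function may equal $M$ at $v^*$ and decay almost immediately, so that its integral over the facets through $v^*$ is arbitrarily small (a ``spike''). The role of the curvature bound is precisely to forbid such spikes. Concretely, I would use that the Riemannian curvature of the toric metric $g=u_{ij}\,dx^i dx^j+u^{ij}\,d\theta_i d\theta_j$ is a scale-invariant expression built from the cometric $u^{ij}$, the cubic form $u_{ijk}$ and the fourth derivatives $u_{ijkl}$; the bound $|\mathrm{Rm}|\le 1$ thus controls the cubic form and the variation of $u^{ij}$ in the metric $(u_{ij})$, away from the Guillemin model at $\partial P$ where $u^{ij}$ degenerates in the prescribed way. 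This should yield a gradient-type (anti-concentration) estimate for the restriction $u_i:=u|_{P_i}$ to a facet $P_i\ni v^*$: the steepness with which $u_i$ can fall away from its vertex value $M$ is controlled in terms of $M$ and $P$ alone. Consequently $u_i\ge M/2$ on a definite portion of $P_i$ near $v^*$, whence $\int_{P_i}u_i\,d\sigma\ge\psi(M)$ for an increasing function $\psi$ depending only on $P$. Since $\int_{P_i}u_i\,d\sigma\le\int_{\partial P}u\,d\sigma<C_1$, this bounds $M$ by a constant depending only on $C_1$ and $P$.

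I expect the main obstacle to be exactly the key estimate of the previous paragraph: turning the fourth-order, matrix-valued bound $|\mathrm{Rm}|\le 1$ into a first-order ``no spike'' statement for $u_i$, uniformly up to the boundary, where the metric blows up according to the Guillemin conditions and the facet $P_i$ itself carries induced degeneracies along its own sub-faces. Two points require care. First, the log-singular behavior of $u$ near $\partial P$ must be separated from the smooth part $f$, and the estimate must be anchored in the interior through \eqref{radial} and then propagated to the boundary. Second, in real dimension $\ge 4$ the curvature bound by itself still permits the metric to develop long necks or to collapse near the divisors while keeping $|\mathrm{Rm}|$ bounded; it is precisely this degeneration that the hypothesis $\int_{\partial P}u\,d\sigma<C_1$ must rule out. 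Thus the curvature bound and the boundary-integral bound have to be used together, and the technical heart of the proof is their precise interaction in a fixed neighborhood of each facet of $P$.
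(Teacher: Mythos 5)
Your overall strategy is sound and is in fact close to the paper's own: reduce to the value $M=u(v^*)$ at a vertex, show that the curvature bound forbids $u$ from spiking at that vertex, and then play the resulting lower bound for $u$ on the facets through $v^*$ against $\int_{\partial P}u\,d\sigma<C_1$. The problem is that the entire analytic content of the theorem sits in the step you label ``the main obstacle'' and leave unproved: the anti-concentration estimate. As written, you assert that $|\mathrm{Rm}|\le 1$ ``should yield'' a bound on the steepness of $u|_{P_i}$ in terms of $M$ and $P$, but you give no mechanism, and the hints you offer point the wrong way. A curvature bound by itself can never control first derivatives (adding an affine function changes neither the metric nor $|\mathrm{Rm}|$), so it must be coupled to something that fixes the affine gauge; and your suggestion to ``anchor the estimate in the interior through the Taylor formula and then propagate it to the boundary'' is backwards --- in the actual proof the anchor is at the boundary, supplied precisely by the Guillemin conditions that your sketch treats mainly as a degeneracy to be worked around. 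Your Taylor identity for $M$ along the segment from $x_0$ to $v^*$ plays no role in closing the argument.

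The missing mechanism is a one-dimensional reduction. By Lemma 3 of \cite{D3} and Lemma 4.3 of \cite{H1}, if $V$ is the restriction of $u$ to a line segment in $\bar P$ (segments lying inside a facet are allowed, by taking limits), then $W=1/V''$ satisfies $W''\le |\mathrm{Rm}|\le 1$. For a segment leaving the vertex $v^*=O$ in the standard Delzant chart, the Guillemin boundary conditions force $W(0)=0$ and $W'(0)=2$ (after normalizing the direction), and integrating the ODE inequality twice gives the two-sided bound $2t-\tfrac12 t^2\le W(t)\le 2t+\tfrac12 t^2$, i.e.\ two-sided bounds on the Hessian of the smooth part $f$ of $u$ along every such segment; this is exactly what is carried out in Proposition \ref{grad}. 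Only now does first-order information appear: coupling these Hessian bounds to the normalization ($u\ge 0$ on $\bar P$) and to the maximality of $u(O)$ --- and, in the case where $u$ decays steeply along an edge, to the facet-interior bounds of Lemma \ref{bound}, which is where $\int_{\partial P}u\,d\sigma<C_1$ enters --- bounds the rate at which $u$ can fall away from $O$. This is your ``no spike'' statement, and the paper implements it through a case analysis on the sign of $\partial f/\partial x_1(O)$ (Propositions \ref{gb} and \ref{gb2}) before concluding by integrating along a segment from $O$ to an interior point of a facet. With the ODE reduction and the vertex anchoring in hand, your outline can indeed be completed essentially as you describe (and your worry about necks or collapse evaporates, since the only curvature information ever used is this scalar inequality along segments, with uniform initial conditions coming from the Delzant/Guillemin structure); without them, your key step is an assertion, not a proof.
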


If $X=\mathbb{CP}^1$, the above theorem is trivially true. And when $X$ is a toric surface, our theorem is proved in \cite{H2}. Thus we only consider the case where the dimension of $X$ is greater than two. Without loss of generality, we can assume that we work in a standard model, i.e., $O = (0, \ldots, 0)$ is a vertex of $P$, $x_1, \ldots, x_n$ where $0 \leq x_i \leq 1$ for all $i$ are the edges of $P$ around $O$ and $P$ lies in the first quadrant. Guillemin's boundary conditions tell us that 
$$
u= \frac{1}{2} ( x_1 \ln x_1 + \cdots + x_n \ln x_n) + f(x_1, \ldots, x_n),
$$
where $f$ is a smooth function up to the boundary.

Our first observation is the following lemma:

\begin{lem} In each edge $x_i$, let $V_i(x_i) =  \frac{1}{2} x_i \ln x_i + f(0,\ldots,0,x_i,0,\ldots,0)$, then
$$
\left( \frac{1}{V_i''} \right)''(x_i) = \lim_{x \rightarrow (0,\ldots,0,x_i,0,\ldots,0) } u^{ii}_{~ii} (x).
$$
\end{lem}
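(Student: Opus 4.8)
The plan is to read the structure of the Hessian $(u_{ab})$ near the $i$-th edge straight off Guillemin's expansion and then to extract the $(i,i)$ entry of its inverse by a Schur complement. Differentiating $u = \frac12\sum_c x_c\ln x_c + f$ twice gives $u_{ab}(x) = \frac{1}{2x_a}\delta_{ab} + f_{ab}(x)$ with $f$ smooth up to $\partial P$. I would reorder the coordinates so that $i$ comes first and split
$$(u_{ab}) = \begin{pmatrix} u_{ii} & b^T \\ b & D \end{pmatrix},$$
where $D$ is the block indexed by $j \neq i$ and $b_j = u_{ij} = f_{ij}$. As $x \to (0,\dots,x_i,\dots,0)$ the entry $u_{ii} = \frac{1}{2x_i} + f_{ii} \to V_i''(x_i)$ stays finite, the off-diagonal vector $b$ stays bounded, while the diagonal of $D$ blows up like $\frac{1}{2x_j}$. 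The Schur complement formula gives $u^{ii} = (u_{ii} - b^TD^{-1}b)^{-1}$, so the first task is to show the correction term $b^TD^{-1}b$ is negligible; writing $D = \Lambda + F$ with $\Lambda = \mathrm{diag}(\frac{1}{2x_j})_{j\neq i}$ and $F$ bounded, one has $D^{-1} = \Lambda^{-1}(I+F\Lambda^{-1})^{-1} = O(\max_{j\neq i} x_j) \to 0$, whence $u^{ii}(x) \to 1/V_i''(x_i)$.

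The real content is the convergence of the \emph{second $x_i$-derivative}. The decisive structural fact is that the singular part $\Lambda$ depends only on the transverse coordinates $x_j$, $j\neq i$, hence $\partial_i\Lambda = 0$ and all $x_i$-dependence is confined to the smooth remainder $f$. Thus $\partial_i D = \partial_i F$ is bounded, and from $\partial_i D^{-1} = -D^{-1}(\partial_i D)D^{-1}$ together with the analogous formula for $\partial_i^2 D^{-1}$, every such derivative carries at least one factor $D^{-1} = O(\max_{j\neq i}x_j)$ and therefore vanishes in the limit. Since $b$, $\partial_i b$, $\partial_i^2 b$ are all bounded, it follows that $b^TD^{-1}b$ and its first two $x_i$-derivatives tend to $0$. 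On the other hand $u_{ii}$ and its $x_i$-derivatives converge to $V_i''$, $V_i'''$, $V_i''''$, since the $\frac{1}{2x_i}$ term is untouched by the transverse limit and $f_{ii}(x)\to f_{ii}(0,\dots,x_i,\dots,0)$ along with its $x_i$-derivatives. As $u^{ii} = (u_{ii}-b^TD^{-1}b)^{-1}$ is a smooth function of these quantities which stays bounded away from $0$ (by strict convexity $V_i'' > 0$), the chain rule yields
$$\partial_i^2 u^{ii}(x) \longrightarrow \partial_i^2\!\left(\frac{1}{V_i''}\right)\!(x_i) = \left(\frac{1}{V_i''}\right)''(x_i),$$
which is exactly $u^{ii}_{~ii}$ in Abreu's notation.

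The main obstacle I expect is making the derivative estimates for $D^{-1}$ genuinely uniform: one must verify that $(I+F\Lambda^{-1})^{-1}$ and its $x_i$-derivatives stay bounded near the edge, which follows from a Neumann series once $\|F\Lambda^{-1}\| < 1$, and one must confirm that differentiating in the edge direction never reintroduces the singular diagonal. All of this rests only on Guillemin's boundary conditions and the smoothness of $f$ up to $\bar{P}$; notably the curvature hypothesis of \thmref{dia} is not needed for this lemma. A minor point to handle is that the limit is taken at an interior point of the edge, so one should keep $x_i$ bounded away from the adjacent vertices to avoid the corner singularities where several $x_j$ degenerate simultaneously.
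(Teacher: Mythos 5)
Your proof is correct, and it is worth comparing it with the paper's argument. The paper writes $u^{11}=A/\det(u_{ij})$, where $A$ is the cofactor obtained by deleting the first row and column, differentiates this quotient twice in $x_1$, and then asserts that the whole expression converges to $-v''/v^2+2(v')^2/v^3=(1/v)''$ with $v=V''$; the term-by-term justification of this limit (why the individual ratios $A_{11}/\det(u_{ij})$, $A_1\det(u_{ij})_1/(\det(u_{ij}))^2$, etc., whose numerators and denominators separately blow up near the edge, combine to the stated expression) is left implicit. Your Schur-complement formulation $u^{ii}=(u_{ii}-b^TD^{-1}b)^{-1}$ is algebraically the same identity, since $\det(u_{ab})=\det(D)\,(u_{ii}-b^TD^{-1}b)$ and $A=\det(D)$, but it reorganizes the computation so that the limit interchange becomes transparent: all singular behavior is confined to $D$, and the two structural facts you isolate --- that $\partial_i$ never hits the singular diagonal $\Lambda$ (so $\partial_iD$ stays bounded), and that every $x_i$-derivative of $D^{-1}$ retains a factor $D^{-1}=O(\max_{j\neq i}x_j)$ --- force the correction term $b^TD^{-1}b$ to vanish in the limit together with its first two $x_i$-derivatives, after which the chain rule applied to $S\mapsto 1/S$ with $S\to V_i''>0$ gives exactly the paper's final identity. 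What your route buys is the rigor the paper's terse proof omits (uniform control of $(I+F\Lambda^{-1})^{-1}$ via a Neumann series, and a clean passage to the limit under two differentiations), at the cost of slightly more linear-algebraic setup; what the paper's route buys is brevity, since it never needs to name or estimate the correction term. Your closing remarks --- that the curvature hypothesis of Theorem~\ref{dia} plays no role in this lemma, and that $x_i$ must stay in the interior of the edge so that only the transverse coordinates degenerate --- are both accurate and consistent with how the lemma is used later.
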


\begin{proof}
Let 
$$
A =\det \left(
\begin{array}{ccc}
\frac{1}{2x_2} + f_{22} & \cdots & u_{2n} \\
& \vdots &\\
u_{n2} & \cdots & \frac{1}{2x_n} + f_{nn}
\end{array}
\right)
$$

Then
$$
u^{11} = \frac{A}{\det(u_{ij})}
$$

We obtain
\begin{eqnarray*}
u^{11}_{~11} & = & \left( \frac{A_{1}}{\det(u_{ij})} -   \frac{A \det(u_{ij})_1}{(\det(u_{ij}))^2} \right)_1 \\
&=& \frac{A_{11}}{\det(u_{ij})} -  2 \frac{A_1 \det(u_{ij})_1}{(\det(u_{ij}))^2} - \frac{A \det(u_{ij})_{11}}{(\det(u_{ij}))^2 } + 2 \frac{A (\det(u_{ij})_1)^2}{(\det(u_{ij}))^3}.
\end{eqnarray*}

Let $$V(x_1) = \frac{1}{2} x_1 \ln x_1 + f(x_1,0,\ldots,0), \quad v(x_1) = V''(x_1)$$ As $x \rightarrow (x_1, 0, \ldots, 0)$, we get
\begin{eqnarray*}
\lim_{x \rightarrow (x_1, 0, \ldots, 0)} u^{11}_{~11} (x) & = & \left( - \frac{v''}{v^2} + 2 \frac{v'^2}{v^3} \right) (x_1) \\
& = & \left(- \frac{v'}{v^2} \right)' (x_1) \\
& = & \left( \frac{1}{v} \right) ''(x_1).
\end{eqnarray*}

Hence we obtain the desired result.
\end{proof}

It is shown in \cite{D2} that the norm of Riemannian curvature is expressed as 
$$
|Rm|^2 = \sum u^{ij}_{~kl} u^{kl}_{~ij}.
$$

By direction calculations, we have

\begin{lem} $u^{ij}_{~kl}(x_1, \ldots, x_n)$ is finite for all $x_1, \ldots, x_n \in [0,1]$ and all $i,j,k,l$. Moreover for each $i,j,k,l$ and $x_1 \in (0,1]$.
$$
u^{ij}_{~kl} u^{kl}_{~ij} (x_1, 0, \ldots, 0) = 0
$$
unless $i=k, j=l$ or $i=l, j=k$.
\end{lem}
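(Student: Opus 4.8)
The plan is to reduce everything to the structure of the inverse Hessian $u^{ij}$ near the edge $\{x_2=\dots=x_n=0\}$, and to track, for each index pair, the exact order to which $u^{ij}$ vanishes there. Throughout I split the indices into $\{1\}$ and $I=\{2,\dots,n\}$, since along the edge the diagonal Hessian entries $1/(2x_i)$ with $i\in I$ blow up while $1/(2x_1)$ stays bounded (recall $x_1\in(0,1]$).

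First I would record the local form of the Hessian. Guillemin's expansion gives $u_{ij}=\frac{1}{2x_i}\delta_{ij}+f_{ij}$ with $f$ smooth up to $\partial P$. Writing $B=\mathrm{diag}(2x_2,\dots,2x_n)$, the $I$-block of the Hessian is $B^{-1}+F_{II}=B^{-1}(I+BF_{II})$, whose inverse $(I+BF_{II})^{-1}B$ is a smooth matrix function of $x$ up to $\{x_I=0\}$ because $I+BF_{II}\to I$. Block inversion then exhibits every entry $u^{ij}$ as a smooth function up to the edge; running the same factorization near every boundary stratum, including the vertex (where one factors out the full $\mathrm{diag}(2x_i)$), and invoking compactness, yields the finiteness of all $u^{ij}_{~kl}=\partial_k\partial_l u^{ij}$ asserted in the first half of the statement.

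Second I would establish a divisibility property. As $x_m\to 0$ the corresponding diagonal Hessian entry blows up, so the $m$-th row and column of the inverse tend to zero; hence $u^{ij}$ vanishes identically on the facet $\{x_m=0\}$ whenever $m\in\{i,j\}$. By Hadamard's lemma this gives $u^{ij}=\big(\prod_{m\in S(i,j)}x_m\big)\,h_{ij}$ with $h_{ij}$ smooth, where $S(i,j)=\{i,j\}\cap I$. Consequently $\partial_k\partial_l u^{ij}$ still carries an undifferentiated factor $x_{m_0}$, and so vanishes on the edge, as soon as some $m_0\in S(i,j)$ lies outside $\{k,l\}$; thus $\partial_k\partial_l u^{ij}\big|_{\mathrm{edge}}\ne 0$ forces $S(i,j)\subseteq\{k,l\}$. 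Therefore, for the product $u^{ij}_{~kl}u^{kl}_{~ij}=(\partial_k\partial_l u^{ij})(\partial_i\partial_j u^{kl})$ to be nonzero on the edge we need both $S(i,j)\subseteq\{k,l\}$ and $S(k,l)\subseteq\{i,j\}$, forcing $\{i,j\}\cap I=\{k,l\}\cap I=:T$.

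A short case analysis on $T$ finishes almost everything: if $|T|=0$ then $(i,j)=(k,l)=(1,1)$, and if $|T|=2$ then both pairs are the pair of distinct indices in $T$, so in both cases the indices match up to a swap. The only residual possibility is $T=\{m\}$ a singleton, with one pair equal to $(m,m)$ and the other to $(1,m)$; this is exactly where plain divisibility is not enough, and is the main obstacle. To kill it I would compute the leading coefficient of $u^{mm}$ one order further: along the $x_m$-direction the Neumann expansion of $(I+BF_{II})^{-1}B$ gives $u^{mm}=2x_m+(\text{terms divisible by }x_m^2)$, so $h_{mm}\big|_{\mathrm{edge}}\equiv 2$ is \emph{constant in} $x_1$. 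Hence $\partial_1\partial_m u^{mm}\big|_{\mathrm{edge}}=\partial_1 h_{mm}\big|_{\mathrm{edge}}=0$, and since $u^{mm}_{~1m}=\partial_1\partial_m u^{mm}$ occurs as a factor of the product for both orderings $\{(m,m),(1,m)\}$, the product vanishes there too. Collecting the three cases gives $u^{ij}_{~kl}u^{kl}_{~ij}=0$ on the edge unless $i=k,\,j=l$ or $i=l,\,j=k$.
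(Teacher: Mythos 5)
Your proof is correct, and it is organized genuinely differently from the paper's. The paper argues by enumerating five cases according to how many of the indices $i,j,k,l$ equal $1$, disposing of each case by ``expressing out'' the relevant derivatives of the cofactor formula for $u^{ij}$ (e.g.\ $u^{ij}_{~11}=0$, $u^{i1}_{~j1}=0$ for $i\neq j$, and $u^{ij}_{~k}$ constant along the edge so that one more $\partial_1$ kills it). You instead extract a single structural mechanism: smooth extension of $(u^{ij})$ to the boundary via the factorization $(u_{ij})=D^{-1}(I+DF)$, then the divisibility $u^{ij}=\bigl(\prod_{m\in\{i,j\}\cap\{2,\dots,n\}}x_m\bigr)h_{ij}$ from Hadamard's lemma, whence $\partial_k\partial_l u^{ij}$ can survive on the edge only if $\{i,j\}\cap\{2,\dots,n\}\subseteq\{k,l\}$; symmetrizing over the two factors of the product forces the two index pairs to have the same intersection with $\{2,\dots,n\}$, and the case analysis on the size of that set reduces everything to the one genuinely nontrivial configuration $\{(m,m),(1,m)\}$, which you eliminate by showing the leading coefficient of $u^{mm}$ in $x_m$ equals $2$ identically along the edge, so $\partial_1\partial_m u^{mm}$ vanishes there --- the same computation that implicitly underlies the paper's case of exactly one index equal to $1$. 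What your route buys is a uniform bookkeeping of vanishing orders that explains \emph{why} all the paper's cases work and isolates exactly where a second-order expansion is needed; what the paper's route buys is brevity and no auxiliary machinery. One small imprecision on your side: you invoke the Neumann expansion of $(I+BF_{II})^{-1}B$ as if it were $u^{mm}$, but the $\{2,\dots,n\}$-block of the full inverse is $(I+B\tilde F)^{-1}B$ with $\tilde F=F_{II}-f_{I1}u_{11}^{-1}f_{1I}$ the Schur-complement correction; since $\tilde F$ is still smooth near the edge (as $u_{11}>0$ there), your conclusion $h_{mm}\equiv 2$ on the edge, and hence the whole argument, is unaffected.
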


\begin{proof}
Expressing $u^{ij}_{~kl}(x_1, \ldots, x_n)$ out, we can see that it is finite. To calculate $u^{ij}_{~kl} u^{kl}_{~ij} (x_1, 0, \ldots, 0)$, we divide all the cases into 5 categories.

\begin{enumerate}
\item None of $i,j,k,l$ equals to 1. First, we assume that $i \neq j$. Expressing $u^{ij}_{~kl}$ out, the only case that $u^{ij}_{~kl}$ is not zero is $i = k, j = l$ or $i = l, j =k$. The remaining case is $u^{ij}_{~kl} = u^{ii}_{~jj} $ which is zero unless $i=j$.

\item There is exactly one of $i,j,k,l$ which equals to 1. We can assume that $l=1$. Then $u^{ij}_{~k} = 2$ if $i=j=k$ otherwise $u^{ij}_{~k} = 0$. In any case we obtain $u^{ij}_{~kl} u^{kl}_{~ij} = 0$.

\item There are exactly two of $i,j,k,l$ which equals to 1. There are essentially two cases. $u^{ij}_{~kl}$ is either $u^{i1}_{~j1}$ or $u^{ij}_{~11}$. Notice that $u^{ij}_{~11} = 0$. And $u^{i1}_{~j1} = 0$ unless $i=j$.

\item There are exactly three of $i,j,k,l$ which equals to 1. Then we can assume that $u^{ij}_{~kl}$ is $u^{i1}_{~11} = 0$.

\item $i=j=k=l=1$. This is obvious.
\end{enumerate}
\end{proof}

As a corollary, we can simplify the expression of $|Rm|$ at $(x_1, 0, \ldots, 0)$.

\begin{cor}
$$
|Rm|^2(x_1, 0, \ldots, 0) = \sum_{i} (u^{ii}_{~ii})^2 + 4 \sum_{i < j} (u^{ij}_{~ij})^2 .
$$
\end{cor}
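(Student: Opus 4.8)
The plan is to feed the previous lemma directly into the Donaldson formula $|Rm|^2 = \sum_{i,j,k,l} u^{ij}_{~kl} u^{kl}_{~ij}$ and simply read off which quadruples survive at the point $(x_1, 0, \ldots, 0)$. By the lemma, every summand $u^{ij}_{~kl} u^{kl}_{~ij}$ vanishes at this point unless $i=k, j=l$ or $i=l, j=k$, so the quadruple sum immediately collapses to these two index families; nothing analytic remains to be done, and the entire content of the corollary is an organization of the surviving terms.

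Next I would invoke the symmetries of the tensor. The quantity $u^{ij}_{~kl}$ is symmetric in the upper pair $(i,j)$, since $u^{ij}$ is the inverse of the symmetric Hessian $u_{ij}$, and symmetric in the lower pair $(k,l)$, since the subscripts are commuting partial derivatives. These symmetries show that each surviving summand equals $(u^{ij}_{~ij})^2$ for the relevant pair of indices, so that the only remaining task is to count multiplicities correctly.

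The counting separates into the diagonal contribution $i=j=k=l$, which produces $\sum_i (u^{ii}_{~ii})^2$ with no repetition, and the off-diagonal contribution $i \neq j$. For a fixed unordered pair $\{i,j\}$ with $i<j$, exactly four surviving quadruples arise, coming from the two ordered choices of the upper pair multiplied by the two subcases $i=k, j=l$ and $i=l, j=k$; each contributes $(u^{ij}_{~ij})^2$, which yields the factor $4$ in front of $\sum_{i<j}(u^{ij}_{~ij})^2$. The only point requiring care, rather than any genuine difficulty, is to avoid double counting the diagonal $i=j$ terms when merging the two subcases, since there the two index families coincide. Once these multiplicities are tracked, collecting the diagonal and off-diagonal parts gives the stated identity at once; there is no analytic obstacle here, only combinatorial bookkeeping.
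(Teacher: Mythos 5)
Your proposal is correct and follows exactly the route the paper intends: the paper states this as an immediate corollary of the preceding vanishing lemma, with no separate proof, and your argument simply makes explicit the bookkeeping (symmetry of $u^{ij}_{~kl}$ in upper and lower indices, the four surviving quadruples per unordered pair $i<j$, and the non-duplicated diagonal terms) that the paper leaves to the reader. Nothing is missing; the multiplicity count of $2\times 2 = 4$ and the careful treatment of the $i=j$ case are precisely what justifies the stated formula.
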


Together with the above lemmas, we have

\begin{prop}
\label{grad}
There exists a constant $C > 0$ depending only on $C_1, P$ such that
$$
\left|\frac{\partial f}{\partial x_1} (x_1, 0,\ldots,0) - \frac{\partial f}{\partial x_1} (0, 0,\ldots,0) \right|< C, ~ x_1 \in [0,1].
$$
\end{prop}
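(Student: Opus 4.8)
The plan is to reduce everything to the single-variable function $v(x_1)=V''(x_1)$ from the first lemma, where $V(x_1)=\frac{1}{2}x_1\ln x_1+f(x_1,0,\ldots,0)$, and to study its reciprocal $w:=1/v$ along the edge $x_1\in[0,1]$. First I would record the input from the curvature hypothesis: by the first lemma, $w''=(1/v)''=\lim_{x\to(x_1,0,\ldots,0)}u^{11}_{~11}(x)$, while the Corollary together with $|Rm|\le 1$ gives $(u^{11}_{~11})^2\le|Rm|^2\le 1$ on the edge, so that
\begin{equation*}
|w''(x_1)|\le 1,\qquad x_1\in(0,1].
\end{equation*}

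The crucial point is that the Guillemin form of $u$ forces two \emph{universal} boundary values on $w$. Since $v(x_1)=\frac{1}{2x_1}+\partial_{11}f(x_1,0,\ldots,0)$ and $f$ is smooth up to the vertex, $\partial_{11}f(0,\ldots,0)$ is finite; hence $w(x_1)=\frac{2x_1}{1+2x_1\,\partial_{11}f}\to 0$ and $w(x_1)/x_1\to 2$ as $x_1\to 0$. Thus $w$ extends continuously to the vertex with $w(0)=0$ and $w'(0)=2$, independently of $u$. Combining these boundary values with $|w''|\le 1$ and Taylor's formula with integral remainder gives, for $s\in[0,1]$,
\begin{equation*}
w'(s)=2+\int_0^s w''(t)\,dt\ \ge\ 1,\qquad \bigl|\,w(s)-2s\,\bigr|=\Bigl|\int_0^s(s-t)\,w''(t)\,dt\Bigr|\le\frac{s^2}{2},
\end{equation*}
so in particular $w(s)\ge s>0$ on $(0,1]$.

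It remains to translate this into the gradient estimate. Since $\partial_{11}f(s,0,\ldots,0)=v(s)-\frac{1}{2s}=\frac{1}{w(s)}-\frac{1}{2s}$, I would write
\begin{equation*}
\frac{\partial f}{\partial x_1}(x_1,0,\ldots,0)-\frac{\partial f}{\partial x_1}(0,\ldots,0)=\int_0^{x_1}\left(\frac{1}{w(s)}-\frac{1}{2s}\right)ds=\int_0^{x_1}\frac{2s-w(s)}{2s\,w(s)}\,ds,
\end{equation*}
and then estimate the integrand via $|2s-w(s)|\le s^2/2$ and $w(s)\ge s$, which yields $\bigl|(2s-w(s))/(2s\,w(s))\bigr|\le 1/4$ and hence a bound of order $x_1/4$. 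This gives the claim with a constant that in fact depends only on the edge length coming from $P$ (and is stronger than the stated dependence on $C_1$ and $P$).

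The main obstacle — and the reason the statement is not completely trivial — is precisely the behaviour near the vertex: both $1/w(s)$ and $1/(2s)$ blow up like $1/s$ as $s\to 0$, so a naive pointwise bound of the form $|\partial_{11}f|\le C/s$ would only produce a logarithmically divergent integral. The whole point is to keep the cancellation between these two singular terms explicit, which is exactly what the sharp control $w(s)=2s+O(s^2)$ — itself a consequence of the universal boundary data $w(0)=0$, $w'(0)=2$ and the curvature bound $|w''|\le 1$ — provides. I would take care to justify the continuous extension of $w$ and $w'$ to the vertex and the use of Taylor's formula purely from $w\in C^2((0,1])$ and the bound on $w''$, rather than assuming any smoothness of $w$ at $s=0$.
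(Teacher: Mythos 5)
Your proof is correct and follows essentially the same route as the paper: both use the lemma identifying $(1/V'')''$ with $u^{11}_{~11}$ along the edge, the curvature bound $|(1/V'')''|\le 1$, the universal boundary data $(1/V'')(0)=0$, $(1/V'')'(0)=2$, and two integrations to get $1/V''=2s+O(s^2)$, which is then converted into a bounded expression for $f_{11}(s,0,\ldots,0)$ and integrated. The only cosmetic difference is that the paper states the resulting cancellation as the pointwise two-sided bound $-\tfrac{1}{8+2s}\le f_{11}\le \tfrac{1}{8-2s}$, while you keep it inside the integral as $\bigl|(2s-w(s))/(2s\,w(s))\bigr|\le 1/4$.
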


\begin{proof}
Let $V(x_1) = \frac{1}{2} x_1 \ln x_1 + f(x_1, 0, \ldots, 0)$. It is easy to see that 
$$
\left( \frac{1}{V''} \right)' (0) = 2.
$$
So
\begin{eqnarray*}
\left| \left( \frac{1}{V''} \right)' (s) - 2 \right| = \left| \int_0^s \left( \frac{1}{V''} \right)'' (x) \ dx \right| \leq s.
\end{eqnarray*}
Hence
$$
2 - s \leq  \left( \frac{1}{V''} \right)' (s) \leq  2+ s.
$$
Since 
$$
\frac{1}{V''}(0) = 0,
$$
we conclude that
$$
2s - \frac{1}{2}s^2 \leq \frac{1}{V''}(s) \leq 2s + \frac{1}{2}s^2.
$$

In terms of $f$, we have
$$
-\frac{1}{8+2 s} \leq \frac{\partial^2 f}{\partial x_1^2} (s, 0)  \leq \frac{1}{8 -2 s}.
$$

Thus we obtain the conclusion.
\end{proof}

Our next few lemmas show that we can also control $| \frac{\partial f}{\partial x_2}(x_1, 0) - \frac{\partial f}{\partial x_2}(0, 0)|$.

\begin{lem} For any $x_1 \in (0,1]$, we have
$$
u^{12}_{~12}(x_1, 0) = - \frac{\partial \frac{2 f_{12}}{f_{11} + \frac{1}{2 x_1}}}{\partial x_1} (x_1, 0)
$$
\end{lem}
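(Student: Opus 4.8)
The plan is to compute $u^{12}$, the $(1,2)$-entry of the inverse Hessian, by a block-inversion (Schur complement) argument that isolates the indices $1,2$, and then to read off $u^{12}_{~12}$ on the edge after clearing the boundary-singular factor. Partition the indices into $\{1,2\}$ and $\{3,\dots,n\}$ and write the Hessian in block form
\[
(u_{ij}) = \begin{pmatrix} B & C \\ C^{T} & E \end{pmatrix},
\]
where $B$ is the $2\times 2$ block on the indices $1,2$, the block $E$ collects the indices $3,\dots,n$, and $C$ is the coupling. The standard block-inverse formula then gives
\[
u^{12} = \left[ \bigl(B - C E^{-1} C^{T}\bigr)^{-1} \right]_{12}.
\]

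First I would reduce to the $2\times2$ block. Near the edge $(x_1,0,\dots,0)$ the diagonal entries $E_{kk} = \frac{1}{2x_k} + f_{kk}$ with $k \geq 3$ blow up while the off-diagonal entries $f_{kl}$ stay bounded, so $E$ is diagonally dominant and $E^{-1} = O(\max_{k\geq 3} x_k)$. Using $\partial_\alpha E^{-1} = -E^{-1}(\partial_\alpha E)E^{-1}$ and the analogous second-order formula, one sees that this order estimate survives one and two applications of $\partial_1,\partial_2$, since those derivatives act only through the smooth $f_{kl}$ and never lower the order in $x_3,\dots,x_n$. Hence the correction $C E^{-1} C^{T}$, together with its first and second $x_1,x_2$-derivatives, tends to $0$ as $x\to(x_1,0,\dots,0)$. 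Since the previous lemma guarantees that $u^{12}_{~12}$ extends to $\bar P$, the boundary value may be evaluated along the convenient approach $x_3=\dots=x_n\to 0$, which shows that on the edge $u^{12}$ and its $\partial_1\partial_2$-derivative agree with those of $g_0 := (B^{-1})_{12}$.

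It then remains to compute $\partial_1\partial_2 g_0$ on the edge. Writing $a = f_{11} + \frac{1}{2x_1}$, $c = f_{12}$ and $\delta = a f_{22} - c^{2}$, and clearing the singular factor in $B_{22}=\frac{1}{2x_2}+f_{22}$, one obtains
\[
g_0 = \frac{-c}{B_{11} B_{22} - c^{2}} = \frac{-2 x_2\, c}{a + 2 x_2\, \delta},
\]
which, since $a>0$ for $x_1\in(0,1]$, is manifestly smooth up to $x_2 = 0$ and vanishes there. A one-line quotient-rule computation gives $\partial_2 g_0|_{x_2=0} = -\frac{2c}{a} = -\frac{2 f_{12}}{f_{11}+\frac{1}{2x_1}}$; because $\partial_1$ is tangent to the face $\{x_2=0\}$, applying it to this boundary value yields
\[
u^{12}_{~12}(x_1,0) = \partial_1\partial_2 g_0\big|_{x_2=0} = -\,\partial_1\left( \frac{2 f_{12}}{f_{11}+\frac{1}{2x_1}} \right),
\]
which is the asserted identity.

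The main obstacle is the reduction step: justifying that passing from the full $n\times n$ inverse to the $2\times 2$ block $B^{-1}$ commutes with the two derivatives $\partial_1,\partial_2$ and with the boundary limit. This requires quantitative control of $E^{-1}$, $\partial_\alpha E^{-1}$ and $\partial_\alpha\partial_\beta E^{-1}$, uniform as $x_3,\dots,x_n\to 0$, which follows from the diagonal dominance of $E$ but must be carried out with care; once this vanishing is established, the remaining asymptotics are entirely routine. One could instead proceed from the cofactor expression $u^{12} = -M_{21}/\det(u_{ij})$, as in the proof of the first lemma, but tracking which monomials survive the limit is more cumbersome than the block reduction above.
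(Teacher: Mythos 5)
Your proof is correct, but it takes a genuinely different route from the paper's. The paper's proof is exactly the ``cofactor'' computation you dismiss in your last sentence: it writes $u^{12}$ as the ratio $-A/\det(u_{ij})$, where $A$ is the minor obtained by deleting the first column and second row of $(u_{ij})$, and then asserts in one line that $-\lim_{x\to(x_1,0,\ldots,0)}\partial_1\partial_2\bigl(A/\det(u_{ij})\bigr)$ equals the claimed expression, with no intermediate details (``this is done by direct calculations''). Your Schur-complement reduction replaces that determinant bookkeeping with a structural argument: the block $E$ on the indices $3,\ldots,n$ degenerates so that $CE^{-1}C^{T}$ and its $x_1,x_2$-derivatives vanish at the edge, the problem collapses to an honest $2\times 2$ computation, and the closed form $g_0=-2x_2 c/(a+2x_2\delta)$ makes the final quotient-rule step transparent. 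What your approach buys is precisely the detail the paper omits: it isolates \emph{why} only the indices $1,2$ survive, and it makes the boundary regularity of $(B^{-1})_{12}$ explicit rather than implicit. The one step you should spell out (and which you correctly flag) is the passage from ``the correction and its derivatives vanish'' to ``$\partial_1\partial_2(S^{-1})_{12}$ and $\partial_1\partial_2(B^{-1})_{12}$ have the same edge limit,'' where $S=B-CE^{-1}C^{T}$: writing $S^{-1}-B^{-1}=S^{-1}\bigl(CE^{-1}C^{T}\bigr)B^{-1}$ and applying Leibniz, one needs uniform bounds near the edge on $S^{-1}$, $B^{-1}$ and their first and second $x_1,x_2$-derivatives. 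These do \emph{not} follow from naive factor-by-factor estimates, since $\partial_2 B$ has a $(2,2)$-entry blowing up like $x_2^{-2}$; they do follow by observing that $S$ has the same structure as $B$ (singular diagonal plus a smooth bounded perturbation whose $x_1,x_2$-derivatives are bounded), so the same ``multiply numerator and denominator by $2x_2$'' manipulation that exhibits $(B^{-1})_{12}$ as smooth up to $x_2=0$ applies verbatim to $(S^{-1})_{12}$. With that remark inserted, your argument is complete and, if anything, more rigorous than the paper's.
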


\begin{proof}
This is done by direct calculations. Let $A$ be the matrix of $(u_{ij})$ without the first column and the second row. Then

\begin{eqnarray*}
u^{12}_{~12}(x_1, 0) & = & - \lim_{x \rightarrow (x_1, 0,\ldots, 0)} \frac{\partial^2 \frac{A}{\det(u_{ij})}}{\partial x_1 \partial x_2}(x)\\
& = & - \frac{\partial \frac{2 f_{12}}{f_{11} + \frac{1}{2 x_1}}}{\partial x_1} (x_1, 0)\\
\end{eqnarray*}
\end{proof}

\begin{prop}
\label{grad2}
For any $x_1 \in (0,1]$, we have
$$
\left|\frac{\partial f}{\partial x_2} (x_1, 0)  - \frac{\partial f}{\partial x_2} (0, 0)\right| < C,
$$
where $C$ depends only on $C_1$ and $P$.
\end{prop}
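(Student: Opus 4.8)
The plan is to integrate the formula for $u^{12}_{~12}$ supplied by the preceding lemma, exploiting two facts: the curvature bound forces $|u^{12}_{~12}|$ to be small, and the apparent $1/x_1$ singularity in the expression for $f_{12}$ is killed by a linear vanishing at the origin. First I would extract a pointwise bound on $u^{12}_{~12}$. By the corollary, along the edge $(x_1, 0, \ldots, 0)$ the squared curvature norm contains the term $4(u^{12}_{~12})^2$; since the Riemannian curvature is bounded by $1$, this gives
$$
|u^{12}_{~12}(x_1, 0)| \leq \frac{1}{2}, \qquad x_1 \in (0,1].
$$

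Next I would set
$$
g(x_1) = \frac{2 f_{12}(x_1, 0)}{f_{11}(x_1, 0) + \frac{1}{2 x_1}}.
$$
The preceding lemma says precisely that $g'(x_1) = - u^{12}_{~12}(x_1, 0)$, so $|g'| \leq \frac{1}{2}$. Because $f$ is smooth up to the boundary, $f_{12}$ and $f_{11}$ are bounded, while the denominator blows up like $\frac{1}{2 x_1}$ as $x_1 \to 0$; hence $g(0) = 0$. Integrating $g'$ then yields the key estimate $|g(x_1)| \leq \frac{1}{2} x_1$, i.e. $g$ vanishes linearly at the origin.

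Finally I would recover the mixed second derivative and integrate once more. Writing
$$
\frac{\partial f}{\partial x_2}(x_1, 0) - \frac{\partial f}{\partial x_2}(0,0) = \int_0^{x_1} f_{12}(s,0)\, ds
$$
and solving the definition of $g$ for $f_{12}(s,0) = \frac{g(s)}{2}\left(f_{11}(s,0) + \frac{1}{2s}\right)$ gives
$$
\int_0^{x_1} f_{12}(s,0)\, ds = \int_0^{x_1} \frac{g(s)}{2}\, f_{11}(s,0)\, ds + \int_0^{x_1} \frac{g(s)}{4s}\, ds.
$$
The first integrand is bounded because $g$ is bounded and $f_{11}$ is bounded by Proposition~\ref{grad}; the second integrand is bounded because $|g(s)|/s \leq \frac{1}{2}$ by the linear vanishing, so the artificial singularity cancels. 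Both integrals are therefore controlled uniformly over $[0,1]$, delivering the claimed bound.

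The main obstacle is the middle step. A naive solve for $f_{12}$ from the lemma produces a $\frac{1}{2x_1}$ factor that is singular at the origin and, left alone, would only give a logarithmically divergent bound for $\frac{\partial f}{\partial x_2}$. The essential point — and where the curvature hypothesis does the real work — is establishing $g(0) = 0$ together with the linear decay $|g(x_1)| \leq \frac{1}{2} x_1$, so that $\frac{g(s)}{4s}$ is in fact bounded and integrable near $s = 0$. Verifying $g(0)=0$ cleanly from the boundary regularity of $f$ and tracking this linear decay is the crux of the argument.
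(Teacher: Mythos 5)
Your proof is correct and follows essentially the same route as the paper: the curvature bound plus the preceding lemma gives $|g'|\le \tfrac12$ for $g = \frac{2f_{12}}{f_{11}+\frac{1}{2x_1}}$, the boundary regularity of $f$ gives $g(0)=0$, and integrating yields the linear decay $|g(x_1)|\le \tfrac12 x_1$, which is exactly the paper's key estimate. The one place you diverge is the final integration: you control $\int_0^{x_1} \tfrac{g(s)}{2} f_{11}(s,0)\,ds$ by invoking a pointwise bound on $f_{11}$ and attributing it to Proposition~\ref{grad}, but the \emph{statement} of that proposition only bounds the difference $f_1(x_1,0)-f_1(0,0)$, not $f_{11}$ itself; the pointwise bound you need, $-\frac{1}{8+2s}\le f_{11}(s,0)\le \frac{1}{8-2s}$, is in fact established inside the proof of Proposition~\ref{grad}, so your argument goes through once you cite or re-derive that display rather than the proposition's conclusion. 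The paper sidesteps this entirely: it writes $|2f_{12}(s,0)| < s f_{11}(s,0)+\tfrac12$ and then evaluates $\int_0^{x_1} s f_{11}(s,0)\,ds$ by integration by parts, reducing it to the gradient-difference bound that Proposition~\ref{grad} literally states. Both variants are sound; yours is slightly more direct, the paper's stays within the stated conclusions of the earlier results.
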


\begin{proof}
Combining the previous results, we have
$$
 \left| \frac{\partial \frac{2 f_{12}}{f_{11} + \frac{1}{2 x_1}}}{\partial x_1} (x_1, 0) \right| < 1.
$$

Notice that  
$$
\lim_{x_1 \rightarrow 0}  \frac{2 f_{12}}{f_{11} + \frac{1}{2 x_1}} (x_1, 0) = 0.
$$

Then

\begin{eqnarray*}
\left|  \frac{2 f_{12}}{f_{11} + \frac{1}{2 x_1}} (x_1, 0) \right| < x_1.
\end{eqnarray*}

Thus we have
$$
- x_1 \left(f_{11} + \frac{1}{2x_1}\right) <2 f_{12}(x_1,0) <  x_1 \left(f_{11} + \frac{1}{2x_1}\right).
$$

To obtain our conclusion, we need to calculate 
\begin{eqnarray*}
& &\left| \int_0^{x_1} s f_{11}(s, 0) ~ ds \right| \\
&=&\left| x_1 f_1(x_1,0) - \int_0^{x_1} f_1(s,0) ~ ds \right| \\
&=&\left| x_1 (f_1(x_1,0) - f_1(0,0)) - \int_0^{x_1} (f_1(s,0) - f_1(0,0)) ~ ds \right| \\
&<& C x_1.
\end{eqnarray*}
\end{proof}

Let us assume that $u$ reaches its maximum at $O = (0,\ldots,0)$. Let 
$$
\left|\frac{\partial f}{\partial x_i} (O) \right| = B_i.
$$ We further assume that $$B_1 \geq \cdots \geq B_n.$$ We now try to control $B_1$.

\begin{prop}
\label{gb}
If $\frac{\partial f}{\partial x_1} (O) > 0$, then $B_1$ is bounded by a constant depending only on $C_1$ and $P$.
\end{prop}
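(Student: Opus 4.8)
The plan is to combine the standing assumption that $u$ is maximized at the vertex $O$ with the one‑sided control on $\partial f/\partial x_1$ supplied by Proposition \ref{grad}, reading off a contradiction along the first edge as soon as $B_1$ is large.

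First I would restrict $u$ to the edge $(x_1,0,\ldots,0)$ with $x_1\in[0,1]$ and set $V(x_1)=\frac{1}{2}x_1\ln x_1+f(x_1,0,\ldots,0)$, exactly as in the earlier lemmas. Since $\ln 1=0$, the endpoint values are $V(0)=f(O)=\max_P u$ and $V(1)=f(1,0,\ldots,0)=u(1,0,\ldots,0)$. The point $(1,0,\ldots,0)$ lies in $\bar P$, so the fact that $u$ attains its maximum at $O$ forces $V(1)\le V(0)$, i.e.
\[
f(1,0,\ldots,0)\le f(O).
\]
This is the only place where the maximality hypothesis is used.

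Next I would feed in Proposition \ref{grad}. Because $\frac{\partial f}{\partial x_1}(O)=B_1>0$, that proposition gives, for every $x_1\in[0,1]$, the lower bound $\frac{\partial f}{\partial x_1}(x_1,0,\ldots,0)>B_1-C$ with $C=C(C_1,P)$. Integrating from $0$ to $1$ and using the fundamental theorem of calculus yields
\[
f(1,0,\ldots,0)-f(O)=\int_0^1\frac{\partial f}{\partial x_1}(x_1,0,\ldots,0)\,dx_1>B_1-C.
\]
Comparing with the displayed inequality of the previous paragraph gives $B_1-C<f(1,0,\ldots,0)-f(O)\le 0$, hence $B_1<C$, which is precisely the assertion.

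I expect the delicate points to be bookkeeping rather than hard analysis. One must make sure the standard model genuinely places $(1,0,\ldots,0)$ in $\bar P$ with the Guillemin expansion $u=\tfrac12\sum_i x_i\ln x_i+f$ valid along the whole segment, so that $V(1)=f(1,0,\ldots,0)$ holds on the nose and so that Proposition \ref{grad} applies throughout $[0,1]$. The real structural point to emphasize is the role of the sign: the argument uses \emph{only} the lower bound $\partial f/\partial x_1>B_1-C$, and it produces a contradiction precisely because $f$ is then forced to grow along the edge past $\max_P u$. If instead $\partial f/\partial x_1(O)<0$, the same estimate yields only the consistent inequality $f(1,0,\ldots,0)-f(O)<-B_1+C\le 0$ and no bound on $B_1$; this is exactly why the proposition is stated under the positivity hypothesis, with the opposite sign to be dealt with separately.
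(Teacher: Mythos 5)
Your proof is correct, but it is not the argument the paper gives; it is a genuine streamlining of it. The paper does not work along the edge: it joins $O$ to the diagonal point $x_0=\left(\frac{n-1}{n},\frac{1}{(n-2)n},\ldots,\frac{1}{(n-2)n},0\right)$ in the facet $\{x_n=0\}$, writes the restriction of $u$ to that segment as $\frac{1}{2}t\ln t+g(t)$, and bounds $g'(0)\ge \frac{n-2}{n}B_1+C_0$; because the diagonal direction picks up the derivatives $\frac{\partial f}{\partial x_i}(O)$ for $i\ge 2$, this step needs the standing assumption $B_1\ge\cdots\ge B_n$ to absorb those terms, and it needs $n>2$ for the arithmetic to come out positive. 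The paper then re-derives the lower bound $g''(t)\ge -\frac{1}{8+2t}$ along this new segment, which requires the curvature estimate along an arbitrary line segment (Lemma 3 of \cite{D3} and Lemma 4.3 of \cite{H1}), and closes with the same endpoint comparison $g(0)=V(0)\ge V(1)=g(1)$ coming from maximality of $u$ at $O$ that you use. Your route stays on the edge, where only $\frac{\partial f}{\partial x_1}$ appears, so Proposition (\ref{grad}) can be quoted verbatim for the derivative lower bound; you need neither the ordering of the $B_i$, nor a fresh second-derivative estimate, nor any dimensional restriction, and the endpoint $(1,0,\ldots,0)\in\bar{P}$ with the Guillemin expansion along the closed edge is exactly the setup the paper itself relies on (e.g.\ it takes $y_1=(1,0,\ldots,0)$ in Proposition (\ref{gb2})). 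What the paper's more elaborate choice buys is a template that works off the edge: essentially the same diagonal-segment computation is what carries the negative case (Proposition (\ref{gb2})) and the concluding proof of Theorem (\ref{dia}), where one cannot reduce to a single edge. Your closing remark about the sign is also accurate: positivity of $\frac{\partial f}{\partial x_1}(O)$ is precisely what converts the derivative bound into growth of $f$ along the edge, contradicting maximality, whereas the negative case produces only a consistent inequality and must be treated separately.
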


\begin{proof}
Let us pick a point $x_0 =\left(\frac{n-1}{n}, \frac{1}{(n-2)n}, \ldots, \frac{1}{(n-2)n},0 \right)$. We parameterize the segment $l$ connecting $O$ and $x_0$ by 
\begin{eqnarray*}
l(t) =  \left(\frac{n-1}{n}t, \frac{1}{(n-2)n}t, \ldots, \frac{1}{(n-2)n}t,0 \right), \quad 0 \leq t \leq 1.
\end{eqnarray*}

Let 
\begin{eqnarray*}
V(t) &=& u(l(t)) \\
&=& \frac{1}{2} \left(\frac{n-1}{n}t \ln \left(\frac{n-1}{n}t \right) + (n-2) \frac{1}{(n-2)n}t \ln \left(\frac{1}{(n-2)n}t \right) \right)\\
& & +f(l(t)) \\
&=&  \frac{1}{2} t \ln t + g(t),
\end{eqnarray*} 
where $g(t) = f(l(t)) +  \frac{1}{2} \left(\frac{n-1}{n}t \ln \left(\frac{n-1}{n} \right) - \frac{1}{n}t \ln (n-2)n \right) $.
Then 
$$
g'(0) \geq \frac{n-1}{n}B_1 - (n-2) \frac{1}{(n-2)n}B_1 + C_0 =  \frac{n-2}{n}B_1 + C_0,
$$ where $C_0 =\frac{1}{2} \left(\frac{n-1}{n} \ln \left(\frac{n-1}{n} \right) - \frac{1}{n} \ln (n-2)n \right) $.

By Lemma 3 of \cite{D3} and Lemma 4.3 of \cite{H1}, we have
$$
\left( \frac{1}{V''} \right)''(t) \leq |Rm|(l(t)) \leq 1.
$$

Following the calculations in Proposition (\ref{grad}), we have
$$
g''(t) \geq -\frac{1}{8+2 t}, \quad t \in [0,1]. 
$$

Thus
\begin{eqnarray*}
g'(1) &=& g'(0) + \int_0^1 g''(t) ~ dt\\
&\geq&  \frac{n-2}{n}B_1 + C_0 - \int_0^1 \frac{1}{8+2 t} ~ dt  \\
&\geq&  \frac{n-2}{n}B_1 + C.
\end{eqnarray*}

Then
\begin{eqnarray*}
g(1) &=& g(0) + \int_0^1 g'(t) ~ dt\\
&\geq& \frac{n-2}{n}B_1 + C + g(0).
\end{eqnarray*}

Since $g(0) = V(0) \geq V(1) = g(1)$, we conclude that $B_1$ is bounded.
\end{proof}

The second case we should deal with is $\frac{\partial f}{\partial x_1} (0) < 0$. Before we proceed, we observe the following lemma.

\begin{lem}
\label{bound}
Let $Q$ be the facet of $P$ in the $x_1 \cdots x_{n-1}$ plane. For every $\epsilon > 0$, let $Q_\epsilon$ be the set of points in $Q$ whose distance is at least $\epsilon$ away from $\partial Q$. Then there exists a constant $C$ depending only on $\epsilon, C_1$ and $P$ such that for any $x \in Q$
$$
u(x) < C, \quad \left| \frac{\partial u}{\partial x_1} \right| (x) < C, \ldots, \left| \frac{\partial u}{\partial x_{n-1}} \right| (x) < C.
$$
\end{lem}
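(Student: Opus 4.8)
The plan is to argue by induction on the dimension $n$, using the toric surface case (proved in \cite{H2}) as the base case and the $(n-1)$-dimensional form of Theorem~\ref{dia} as the inductive hypothesis. The facet $Q$ lies in $\{x_n = 0\}$, and Guillemin's boundary conditions show that the term $\frac{1}{2}x_n\ln x_n$ vanishes there, so that
$$
u|_Q(x_1,\dots,x_{n-1}) = \frac{1}{2}\sum_{i=1}^{n-1} x_i \ln x_i + f(x_1,\dots,x_{n-1},0)
$$
is itself a symplectic potential on the $(n-1)$-dimensional Delzant polytope $Q$. The strategy is to verify that $u|_Q$ satisfies the hypotheses of Theorem~\ref{dia} in one lower dimension, conclude that $\max_Q u < C$, and then read off the tangential gradient bounds on $Q_\epsilon$ from convexity.

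First I would record that, since $u$ is normalized and convex, $u \geq u(x_0) + Du(x_0)\cdot(x-x_0) = 0$ on all of $\bar{P}$; this nonnegativity is what makes the boundary integral estimates one-sided and hence usable. Next I would check the curvature hypothesis for $u|_Q$. Writing the Hessian of $u$ in block form with the $(n-1)\times(n-1)$ tangential block $B$ and the single large entry $u_{nn}\sim \frac{1}{2x_n}\to\infty$, the off-diagonal entries $u_{in}=f_{in}$ stay finite, so the top-left block of $(u^{ij})$ converges to $B^{-1} = (u|_Q)^{ij}$ as $x_n\to 0$. Combined with the vanishing of the mixed curvature components established in the lemma preceding the Corollary, this lets me compare the intrinsic curvature $\sum (u|_Q)^{ij}_{~kl}(u|_Q)^{kl}_{~ij}$ of $Q$ with the ambient $|Rm|^2 \le 1$ and conclude that the curvature of $u|_Q$ is bounded.

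The substantive step is to bound $\int_{\partial Q} u|_Q\, d\sigma$ in terms of $C_1$ and $P$. Each facet $F$ of $Q$ has the form $F = Q\cap Q'$ for an adjacent facet $Q'$ of $P$, and $u|_F$ is the $x\to F$ limit of $u|_{Q'}$. Using the normal-direction analysis of Proposition~\ref{grad} (the second normal derivative of $f$ is controlled by the curvature), the difference $u|_{Q'}(s,y) - u|_F(y)$ stays bounded uniformly for $s$ in a fixed subinterval of the normal direction; integrating in $s$ by Fubini and using $u\ge 0$ together with $\int_{Q'} u\, d\sigma < C_1$, I would bound $\int_F u\, d\sigma$, and hence $\int_{\partial Q} u|_Q\, d\sigma$, by a constant depending only on $C_1$ and $P$. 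After subtracting an affine function to normalize $u|_Q$ — its coefficients being controlled by the gradient estimates of Propositions~\ref{grad} and \ref{grad2} together with $u\ge 0$ — Theorem~\ref{dia} in dimension $n-1$ yields $\max_Q u|_Q < C$, which is the asserted bound $u(x) < C$ on $Q$.

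Finally, the gradient bounds come for free from convexity: a convex function with $0\le u|_Q < C$ on $Q$ satisfies $|\nabla u|_Q| \le C/\epsilon$ at every point of $Q_\epsilon$, which gives $\left|\partial u/\partial x_i\right|<C$ for $i=1,\dots,n-1$ on $Q_\epsilon$ (the $\epsilon$-dependence of the constant entering only here). I expect the main obstacle to be the boundary-integral transfer together with the normalization: controlling the $(n-2)$-dimensional integral $\int_{\partial Q} u\, d\sigma$ from the $(n-1)$-dimensional hypothesis $\int_{\partial P} u\, d\sigma < C_1$ is not a formal inequality, and it is precisely here that the nonnegativity of $u$ and the normal-derivative estimates must be combined with care.
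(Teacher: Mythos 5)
Your plan misses the one observation that makes this lemma elementary: $Q$ is itself a facet of $P$, hence $Q \subset \partial P$, and since the normalized potential satisfies $u \geq 0$ (it dominates its tangent plane at $x_0$), the hypothesis immediately gives $\int_Q u \, d\sigma \leq \int_{\partial P} u \, d\sigma < C_1$. This is exactly the fact you invoke for the adjacent facets $Q'$, but you never apply it to $Q$ itself. Once you have it, the paper's proof is a pure convexity argument requiring no curvature bound and no induction: for $x \in Q_\epsilon$ and $v = u|_Q$, convexity gives $v \geq v(x)$ on the half $\{ \langle \nabla v(x), y - x \rangle \geq 0 \} \cap Q$ (on all of $Q$ if $\nabla v(x) = 0$), a set whose $d\sigma$-measure is bounded below by a constant $c(\epsilon,P)$ because $x$ is $\epsilon$-far from $\partial Q$; hence $v(x) \leq C_1/c(\epsilon,P)$, and the tangential gradient bound then follows from convexity as in your last paragraph.

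Beyond being far heavier than necessary, your inductive scheme has a genuine circularity at the normalization step. To apply Theorem (\ref{dia}) in dimension $n-1$ you must subtract from $u|_Q$ the affine function $u|_Q(x_0') + Du|_Q(x_0')\cdot(x-x_0')$ at some interior point $x_0'$ of $Q$; then, both to bound $\int_{\partial Q}$ of the normalized potential and to convert the resulting bound back into a bound on $u|_Q$, you need $|u|_Q(x_0')|$ and $|Du|_Q(x_0')|$ under control --- which is precisely the conclusion of the lemma you are proving. The tools you cite for this, Propositions (\ref{grad}) and (\ref{grad2}), cannot supply it: they bound only \emph{differences} of derivatives of $f$ along edges relative to their values at the vertex $O$, while the absolute sizes $B_i = |\partial f/\partial x_i(O)|$ are not yet controlled at this point of the paper --- indeed Proposition (\ref{gb2}), which bounds $B_1$ in the case $\partial f / \partial x_1(O) < 0$, itself uses Lemma (\ref{bound}), so the lemma must be proved before any such bounds are available. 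The boundary-transfer step (estimating $\int_{\partial Q} u\, d\sigma$ over the codimension-two faces of $P$) runs into the same problem, and would additionally require normal-derivative control near those faces that the cited propositions, proved only along the edges through $O$, do not provide.
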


\begin{proof}
Let $v$ be the restriction of $u$ on $Q$.  For any $\epsilon$ and any $x=(x_1, \ldots, x_{n-1}) \in Q_\epsilon$, we want to show that $v(x)$ is bounded by a constant depending only on $\epsilon, C_1$ and $P$. If $\nabla v(x) = (0,\ldots,0)$, then for any $y \in Q$, $v(y) \geq v(x) > 0$ by convexity. Thus $v(x)$ is controlled by $C_1$ and $P$.

Suppose $\nabla v(x) \neq (0,\ldots,0)$. Let $l$ be the hyperplane whose normal vector is $\nabla v(x)$ and $l(x) = 0$. Then for any point $y \in \{l \geq 0\} \cap Q$, we have $v(y) \geq v(x)$ by convexity. Thus $v(x)$ is controlled by $\epsilon, C_1$ and $P$. It is easy to see that $\nabla v(x)$ is also controlled.
\end{proof}

\begin{prop}
\label{gb2}
Suppose $\frac{\partial f}{\partial x_1} (O) < 0$. Then $B_1$ is also bounded by a constant depending only on $C_1$ and $P$.
\end{prop}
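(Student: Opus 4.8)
The plan is to reduce to the positive case already settled in Proposition~(\ref{gb}) by transferring the analysis to the opposite endpoint $\tilde O=(1,0,\ldots,0)$ of the $x_1$-edge. Along that edge Proposition~(\ref{grad}) bounds $f_{11}$, so $\partial f/\partial x_1$ stays within a fixed constant of its value $-B_1$ at $O$; reading the edge from the $\tilde O$ end, the inward slope of the smooth part of the potential has therefore flipped sign and is again comparable to $+B_1$. Thus, up to a change of Guillemin chart, at $\tilde O$ we are back in the situation $\partial f/\partial x_1>0$, where the mechanism of Proposition~(\ref{gb}) applies.

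Concretely, I would first set up the standard model of $P$ at $\tilde O$: by the Delzant condition $n$ facets meet there, giving a chart $u=\tfrac12\sum_i y_i\ln y_i+\tilde f$ with $\tilde f$ smooth up to the boundary and the $y_1$-edge equal to the $x_1$-edge run from $\tilde O$ toward $O$. Using the one-dimensional computations along the edge (as in Propositions~(\ref{grad}) and~(\ref{grad2})) I would show $\partial\tilde f/\partial y_1(\tilde O)\ge B_1-C$, while the slopes along the other edges satisfy $|\partial\tilde f/\partial y_i(\tilde O)|\le B_1+C$, so the $y_1$-direction still carries the dominant, and now positive, slope.

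I would then repeat the segment argument of Proposition~(\ref{gb}) based at $\tilde O$, taking the segment to an interior point $x_0\in Q_\epsilon$ of the facet $Q$ with the weights $\tfrac{n-1}{n}$ and $\tfrac{1}{(n-2)n}$. Writing $u$ along the segment as $\tfrac12 t\ln t+g(t)$, the curvature bound gives $g''(t)\ge -\tfrac{1}{8+2t}$ and the slope computation gives $g'(0)\ge\tfrac{n-2}{n}B_1+C_0$, exactly as before. The one change is that $\tilde O$ is not the maximum of $u$, so the inequality $g(0)\ge g(1)$ is no longer available. In its place I would use the two external bounds now at hand: $g(0)=u(\tilde O)\ge 0$, since $u$ is normalized and hence nonnegative by convexity, and $g(1)=u(x_0)\le C$ by Lemma~\ref{bound} because $x_0\in Q_\epsilon$. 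Almost-convexity then gives $C\ge g(1)\ge g(0)+g'(0)-C''\ge\tfrac{n-2}{n}B_1+C'''$, which bounds $B_1$ by a constant depending only on $C_1$ and $P$.

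The hard part will be the second step, the transfer to the far vertex. The genuine difficulties are that the edges of $P$ at $\tilde O$ are in general not the coordinate directions $x_2,\ldots,x_n$ but skew integral directions determined by the Delzant data, and that the original smooth part $f$ is singular at $\tilde O$ (it absorbs the logarithm of the facet transverse to the edge). I must therefore pass carefully between the two Guillemin charts, verifying both that the inward slope at $\tilde O$ is indeed $\approx +B_1$ and that the remaining, skew edge-slopes do not exceed $B_1$ by more than a constant depending only on $P$; only with such control does the chosen weighting keep $g'(0)$ positive and of size $\tfrac{n-2}{n}B_1$. Choosing $x_0$ so that it lies in $Q_\epsilon$, so that Lemma~\ref{bound} applies, while remaining far enough from $\tilde O$, is exactly what dictates the weights.
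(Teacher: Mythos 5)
Your endgame is exactly right, and it coincides with the paper's: start a segment at the far end $(1,0,\ldots,0)$ of the $x_1$-edge, head into the interior of the facet $Q$, use $g''(t)\ge -\frac{1}{8+2t}$ from the curvature bound, and replace the unavailable inequality $g(0)\ge g(1)$ by the pair $g(0)=u(1,0,\ldots,0)\ge 0$ (normalization plus convexity) together with a bound on $g$ at the endpoint coming from Lemma (\ref{bound}). But the mechanism you propose for producing $g'(0)\gtrsim B_1$ --- re-expressing $u$ in a second Guillemin chart based at $\tilde O=(1,0,\ldots,0)$ --- contains a genuine gap, and it is not the paper's route.

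First, $(1,0,\ldots,0)$ need not be a vertex of $P$, and under the hypothesis $|Rm|\le 1$ it cannot be one: by the argument of Proposition (\ref{grad}), $\left(\frac{1}{V''}\right)'(s)\ge 2-s\ge 1$ on $[0,1]$, whereas Guillemin behavior at a vertex located at $s=1$ would force $\frac{1}{V''}\to 0$ there with $\left(\frac{1}{V''}\right)'\to -2$; equivalently, Proposition (\ref{grad}) makes $f_1$ bounded on the closed unit segment, while a Guillemin vertex at $(1,0,\ldots,0)$ would force $f_1\sim -\frac{1}{2}\ln(1-x_1)\to+\infty$ along the edge. So there is no chart to transfer to at that point. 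Second, even if you base the argument at the true far vertex of the edge (which under the curvature bound lies at distance at least $4$ from $O$), the relation between $f$ and the new potential $\tilde f$ involves exchanging the singular terms $\frac{1}{2}x_1\ln x_1$ and $\frac{1}{2}y_1\ln y_1$, whose derivative along the edge diverges; hence your claimed bounds $\partial\tilde f/\partial y_1(\tilde O)\ge B_1-C$ and $|\partial\tilde f/\partial y_i(\tilde O)|\le B_1+C$ do not follow from Propositions (\ref{grad}) and (\ref{grad2}). Those propositions moreover control only the unit segment $x_1\in[0,1]$, and nothing in the paper controls $V''$ in the middle of a long edge, so the full-edge transfer is not available. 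The paper sidesteps all of this by never changing charts: it takes the segment $l(t)=\left(1-2t,\frac{t}{n-2},\ldots,\frac{t}{n-2},0\right)$, whose $x_1$-component is \emph{decreasing}, so the negative slope bound $\frac{\partial f}{\partial x_1}(1,0,\ldots,0)\le -B_1+C$ from Proposition (\ref{grad}) enters $g'(0)$ with the factor $-2$, while Proposition (\ref{grad2}) controls the remaining terms, giving $g'(0)\ge 2B_1-(B_1+C)-C'\ge B_1-C''$ directly; after that, the endgame you describe (curvature convexity, $g(0)\ge 0$, Lemma (\ref{bound}) at $t=\frac{1}{4}$) closes the argument. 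In short, the sign flip is achieved by the choice of direction of the segment inside the original chart, not by a change of Guillemin chart; as written, your transfer step cannot be carried out.
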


\begin{proof}
Let $y_1 = (1, 0, \ldots, 0),~ y_2 = (0, \frac{1}{2(n-2)}, \ldots, \frac{1}{2(n-2)}, 0)$. We parameterize the line connecting $y_1$ and $y_2$ by $l(t) = (1 - 2t,  \frac{1}{n-2} t,\ldots,\frac{1}{n-2} t, 0),~ t \in [0, \frac{1}{2}]$. Let
\begin{eqnarray*}
V(t) &=& u(l(t)) \\
&=& \frac{1}{2} \left( (1-2t) \ln(1-2t) + t \ln \frac{1}{n-2} t \right) + f(l(t)) \\
&=& \frac{1}{2} t \ln t + g(t),
\end{eqnarray*}
where $g(t) = f(l(t)) + \frac{1}{2} ((1-2t) \ln(1-2t) - t \ln (n-2))$. 

Notice that by Proposition (\ref{grad}), we have $\frac{\partial f}{\partial x_1} (y_1) \leq -B_1 + C$. By Proposition (\ref{grad2}), we have $|\frac{\partial f}{\partial x_i} (y_1) | \leq B_i + C \leq B_1 + C,~ 2 \leq i \leq n.$

Then
$$
g'(0) \geq 2 B_1 - B_1 + C \geq B_1 + C.
$$
Again we have
$$
g''(t) \geq - \frac{1}{8+2t}, \quad t \in [0,\frac{1}{2}].
$$
Thus
$$
g'(t) \geq B_1 + C, \quad t \in [0,\frac{1}{2}].
$$
So
$$
g(\frac{1}{4}) \geq g(0) + \frac{1}{4} (B_1 + C).
$$
Notice that $g(0) = V(0) \geq 0$. And $g(\frac{1}{4}) = V(\frac{1}{4}) + \frac{1}{8} ln 4 $ is bounded by Lemma (\ref{bound}). We obtain that $B_1$ is bounded.
\end{proof}

We are ready to provide a proof of Theorem (\ref{dia}).

\begin{proof}[Proof of Theorem (\ref{dia}).]
We parameterize the line connecting $O$ to $(\frac{1}{n-1},\ldots,\frac{1}{n-1},0)$ by
$$
l(t) = \left(\frac{1}{n-1}t, \ldots, \frac{1}{n-1}t, 0 \right), \quad t \in [0,1].
$$
Let 
\begin{eqnarray*}
V(t) &=& u(l(t)) \\
&=& \frac{1}{2} \left( t \ln \frac{1}{n-1}t \right)  + f(l(t)) \\
&=& \frac{1}{2}  t \ln t + g(t),
\end{eqnarray*}
where $g(t)  = f(l(t)) - \frac{1}{2}t \ln (n-1)$. Notice that $g'(0)$ is bounded by Proposition (\ref{gb}) and (\ref{gb2}). By Lemma (\ref{bound}), $g'(1)$ is bounded. Similar calculations in Proposition (\ref{grad}) show that $$g''(t) \geq - \frac{1}{8+2t}.$$ Thus we obtain  that $g'(t)$ is bounded for $t \in [0,1]$. Since $V(1)$ is bounded by Lemma (\ref{bound}), we have that $V(0)$ is bounded.
\end{proof}

\section{Proof of Theorem (\ref{main})}

Let $u^{(\alpha)}(t, x), ~ t \in [-1,0], x \in P$ be a sequence of modified Calabi flows  on $P$ satisfying:

\begin{itemize}

\item For any $\alpha$, the Riemannian curvature of $u^{(\alpha)}(t,x), ~ t \in [-1, 0], x \in P$ is bounded by $ C_1$.  

\item For any $\alpha$, $u^{(\alpha)}(0,x)$ is normalized and
$$
\int_{\partial P} u^{(\alpha)}(0,x) ~ d \sigma < C_2.
$$
\end{itemize}

By the results of the previous section, we conclude that $u^{(\alpha)}(0,x)$ is bounded for all $\alpha$. Thus $u^{(\alpha)}(t,x)$ is bounded for all $\alpha$ and $t \in [-1,0]$. Let us fix a toric invariant background metric $\omega$ and let its symplectic potential be $u$. Also we let $\varphi^{(\alpha)}(t)$ be the difference of the Legendre transform of  $u^{(\alpha)}(t)$ and $u$. Then Donaldson \cite{D4} shows that
$$
\| \varphi^{(\alpha)}(t) \|_{L^\infty} = \| u^{(\alpha)}(t) - u \|_{L^\infty}.
$$
Since the curvature is bounded, applying Theorem 5.1 of \cite{ChenHe}, we obtain that the metric $\omega_{\varphi^{(\alpha)}(t)}$ is equivalent to $\omega$ and the $C^{3,\alpha}$ norm of $\varphi^{(\alpha)}(t), t \in [-1, 0]$ is uniformly bounded. Thus we can smooth $\varphi^{(\alpha)}(t)$ for $t \in [-\frac{1}{2},0]$ by Theorem 3.3 of \cite{ChenHe}.

Our discussions lead to the following result:

\begin{prop} 

\label{com}

By passing to a subsequence, $\varphi^{(\alpha)}(t)$ converges a limiting modified Calabi flow $\varphi(t),~ t\in [-\frac{1}{2}, 0]$.
\end{prop}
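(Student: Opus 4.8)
The plan is to combine the uniform estimates already assembled in the paragraph preceding the statement with a standard parabolic compactness argument. By that discussion we have, uniformly in $\alpha$: (i) the metrics $\omega_{\varphi^{(\alpha)}(t)}$ are uniformly equivalent to the fixed toric background $\omega$ (Theorem 5.1 of \cite{ChenHe}), and (ii) a uniform $C^{3,\alpha}$ bound for $\varphi^{(\alpha)}(t)$ on $t \in [-1,0]$. These are exactly the hypotheses needed to feed into the smoothing machinery, so the whole proof reduces to upgrading the regularity, extracting a limit, and passing to the limit in the equation.

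First I would upgrade (i)--(ii) to uniform estimates of every order. The modified Calabi flow $\partial_t \varphi = R_\varphi - \theta$ is a fourth-order quasilinear parabolic equation, and the metric equivalence in (i) keeps the relevant Hessian determinant bounded away from zero, so the flow is uniformly parabolic along the whole sequence. Theorem 3.3 of \cite{ChenHe}, the smoothing estimate, then applies to each $\varphi^{(\alpha)}$ and yields, for every integer $k$, a bound $\|\varphi^{(\alpha)}(t)\|_{C^{k}} \le C_k$ for $t \in [-\tfrac{1}{2},0]$; the shrinking of the time interval from $[-1,0]$ to $[-\tfrac12,0]$ is the usual price for instantaneous smoothing. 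The constants $C_k$ are uniform in $\alpha$ because they depend only on the uniform curvature bound $C_1$, the uniform metric equivalence, and the uniform $C^{3,\alpha}$ control from (ii).

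Next I would extract the limit. With uniform $C^{k}$ bounds on the compact space--time domain $X \times [-\tfrac12,0]$ for every $k$, the Arzel\`a--Ascoli theorem together with a diagonal argument produces a subsequence $\varphi^{(\alpha_j)}$ converging in $C^\infty(X \times [-\tfrac12,0])$ to a limit $\varphi(t)$. Since $R_\varphi$ is a smooth expression in the derivatives of $\varphi$ up to fourth order (the metric equivalence keeping the determinant in the denominator uniformly positive), and since all of these derivatives converge uniformly, one can pass to the limit in $\partial_t \varphi^{(\alpha_j)} = R_{\varphi^{(\alpha_j)}} - \theta$ to obtain $\partial_t \varphi = R_\varphi - \theta$. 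Hence $\varphi(t)$, $t \in [-\tfrac12,0]$, is itself a modified Calabi flow, which is the assertion.

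The main obstacle is the first step. The raw input furnished by the previous section is only a $C^{3,\alpha}$ bound, which sits below the order at which the fourth-order flow can even be interpreted classically, so interior Schauder estimates cannot be applied directly to the evolution equation. The device that overcomes this is precisely the pairing of the metric equivalence of Theorem 5.1 of \cite{ChenHe} (guaranteeing uniform parabolicity and, via the $L^\infty$ control of Theorem \ref{dia}, ruling out degeneration or collapsing of the metrics) with the smoothing theorem of Chen--He, which regularizes instantaneously and returns all-order bounds after an arbitrarily short time. Once these uniform higher-order estimates are secured, the compactness and limit-passage steps are routine.
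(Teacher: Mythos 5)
Your proposal is correct and follows essentially the same route as the paper: the paper's own justification is precisely the preceding discussion (the $L^\infty$ bound from Theorem~\ref{dia}, Donaldson's identity, metric equivalence and uniform $C^{3,\alpha}$ bounds via Theorem 5.1 of \cite{ChenHe}, and the smoothing Theorem 3.3 of \cite{ChenHe} on $[-\tfrac{1}{2},0]$), with the final Arzel\`a--Ascoli extraction and passage to the limit in the flow equation left implicit. You have merely written out those last routine steps explicitly, which is exactly what the paper intends.
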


Now we are ready for the proof of our main theorem. 

\begin{proof}[Proof of Theorem (\ref{main}).]

Let $u(t,x)$ be a one parameter group of symplectic potentials satisfying the Calabi flow equation and the Riemannian curvature is uniformly bounded along the flow. Then the corresponding modified Calabi flow is $u(t,x) + t (\theta - \underline{R})$. It is easy to see that the Riemannian curvature is uniformly bounded along the modified Calabi flow. For convenience, we still denote $u(t,x)$ as the modified Calabi flow. Notice that the modified Calabi flow is the downward gradient flow of the modified Mabuchi energy. Since $(X,P)$ is analytic uniform $K$-stable, Proposition 5.1.2 of \cite{D1} tells us that the modified Mabuchi energy is bounded from below. Moreover, Proposition 5.1.8 of \cite{D1} tells us that there exists $C_1 > 0$ such that for each $t$
$$
\int_{\partial P} \tilde{u} (t, x) ~ d\sigma < C_1,
$$
where $\tilde{u}(t,x)$ is the normalization of $u(t,x)$.

Let us pick a sequence of time $t_i \rightarrow \infty$ and we define a sequence of modified Calabi flows as 
$$
u^{(\alpha)}(t, x) = u(t_{\alpha} + t, x),~ t \in [-1,0].
$$

We add some affine function $l^{(\alpha)}(x)$ to $u^{(\alpha)}(t,x), ~ t \in [-1,0]$ so that $u^{(\alpha)}(0,x)$ is normalized. We still denote the new modified Calabi flow as $u^{(\alpha)}(t,x), ~ t \in [-1,0]$. Let us pick a background symplectic potential $u(x)$. Let $\varphi^{(\alpha)}(t)$ be the K\"ahler potential which is the difference of the Legendre transform of $u^{(\alpha)}(t)$ and $u$. Proposition (\ref{com}) shows that after passing to a subsequence, the modified Calabi flows $\varphi^{(\alpha)}(t), ~ t \in [-1,0]$ converges to a limiting modified Calabi flow $\varphi^{(\infty)}(t), ~ t \in [-\frac{1}{2}, 0]$. 

For every $t \in [-\frac{1}{2}, 0]$,  the modified Mabuchi energy of $\varphi^{(\infty)}(t)$ is the infimum of the modified Mabuchi energy of $u(t,x)$. So we conclude that for every $t \in [-\frac{1}{2}, 0]$,  $\varphi^{(\infty)}(t)$ is an extremal metric. Let $u^{(\infty)}(t)$ be the corresponding symplectic potential of $\varphi^{(\infty)}(t),~ t \in [-\frac{1}{2}, 0]$.

Since $\varphi^{(\alpha)}(0)$ converges to $\varphi^{(\infty)}(0)$, the geodesic distance between $\varphi^{(\alpha)}(0)$ and $\varphi^{(\infty)}(0)$ goes to zero as $\alpha$ goes to infinity, i.e.,
$$
\lim_{\alpha \rightarrow \infty} \int_P | u^{(\alpha)}(0) - u^{(\infty)}(0) |^2 ~ d \mu = 0.
$$
It shows that there exists a constant $C > 0$ such that
$$
 \int_P | u^{(\alpha)}(0)|^2 ~ d \mu < C.
$$
Calabi and Chen \cite{CC} show that the geodesic distance is decreasing under the Calabi flow. It is easy to see that  the geodesic distance is decreasing under the modified Calabi flow, i.e.,
$$
\frac{d}{d t} \int_P | u(t) - u^{(\infty)}(0)|^2 ~ d \mu \leq 0.
$$
We conclude that
$$
 \int_P | u(t)|^2 ~ d \mu < C,
$$
for all $t$. Thus $l^{(\alpha)}(x)$ is a bounded affine function on $P$. An immediate consequence is that the $C^0$ norm of $u(t_i)$ is uniformly bounded. Let $\varphi(t)$ be the difference of the Legendre transform of of $u(t)$ and $u$. We conclude that the $C^0$ norm of $\varphi(t_i)$ is uniformly bounded. We construct a sequence of modified Calabi flows as
$$
\phi^{(\alpha)}(t) = \varphi(t_i+t), ~ t \in [-1, 0].
$$
Then the $C^0$ norm of $\phi^{(\alpha)}(t)$ is uniformly bounded for all $\alpha$ and $t \in [-1, 0]$. Since the curvature of $\phi^{(\alpha)}(t)$ is also uniformly bounded for all $\alpha$ and $t \in [-1, 0]$, applying Theorem 5.1 of \cite{ChenHe}, we conclude that the metric $\omega_{\phi^{(\alpha)}(t)}$ is equivalent to $\omega$ and the $C^{3,\alpha}$ norm of $\phi^{(\alpha)}(t), t \in [-1, 0]$ is uniformly bounded. Thus we can smooth $\phi^{(\alpha)}(t)$ for $t \in [-\frac{1}{2},0]$ by  Theorem 3.3 of \cite{ChenHe}. Hence by passing to a subsequence, we obtain a limiting modified Calabi flow $\phi^{(\infty)}(t)$ for $t \in [-\frac{1}{2},0]$. Again, the modified Mabuchi energy of $\phi^{(\infty)}(t),~ t \in [-\frac{1}{2},0]$ are the same because each of them is the infimum of the modified Mabuchi energy of the modified Calabi flow $\varphi(t)$. So we conclude that $\phi^{(\infty)}(t),~ t \in [-\frac{1}{2},0]$ are extremal metrics. Thus $\varphi(t_i)$ converges to the extremal metric $\phi^{(\infty)}(0)$. By applying the results of \cite{HZ}, we conclude that the modified Calabi flow $\varphi(t)$ converges to the extremal metric $\phi^{(\infty)}(0)$ exponentially fast.
\end{proof}

\section{Proof of Theorem (\ref{proof of cos})}

Let $\mathbb{G}$ be a maximal compact subgroup of the identity component of the reduced automorphism group of $(X,J)$, i.e., Aut$_0 (X,J)$. Also we let  $\mathbb{T}$ be a maximal torus of $\mathbb{G}$. Suppose $\omega_0$ is an extremal metric invariant under $\mathbb{G}$. Since $(X,J) = P(E) \rightarrow \Sigma$ and $\Sigma$ is a curve of genus $\geq 2$, we conclude that Aut$_0(X,J) \cong H^0(\Sigma, PGL(E)).$ By Lemma 1 of \cite{ACGT2}, $E$ decomposes as a direct sum
$
E = \bigoplus_{i=0}^l E_i,
$
where $E_i$ is indecomposable and $l=\dim(\mathbb{T})$. 

Next we introduce the notion of admissible K\"ahler metrics. An admissible K\"ahler metric $g$ is a type of the generalized Calabi construction \cite{ACGT2}: 

\begin{enumerate}

\item $g$ is invariant under $\mathbb{T}$.

\item Let $\omega$ be the symplectic form of $g$ and $z$ be the moment map:
$$
z : X \rightarrow \mathfrak{t}^*,
$$
where $\mathfrak{t}$ is the Lie algebra of $\mathbb{T}$. $g$ is also rigid with respect to $\mathbb{T}$, i.e., for any $x \in X$, $\mathfrak{i}_x^* g$ depends only on $z(x)$, where
$
\mathfrak{i}: \mathbb{T} \rightarrow \mathbb{T} \cdot x \subset X
$
is the orbit map.

\item The image of $X$ under the moment map is a Delzant polytope $P$. Notice that in our case, we can associate $P$ with a toric variety $(V, g_V, J_V) = (\mathbb{C}P^l, g_V, J_V)$. Since the torus action is rigid, we obtain the smooth complex quotient $\hat{S} \cong X /\mathbb{T}^c$. Moreover, $X^0 = z^{-1}(P^0)$ is a principal $\mathbb{T}^c$ bundle over $\hat{S}$, where $P^0$ is the interior of $P$. In fact, $(X, J)$ can be obtained by blowing down 
$
\hat{X} = X^0 \times_{\mathbb{T}^c} V \rightarrow \hat{S}
$
along the inverse images of facets of $P$.  In our case, we can write $\hat{S} = P(E_0) \times_\Sigma \cdots \times_\Sigma P(E_l) \rightarrow \Sigma$, where each $E_i \rightarrow \Sigma$ is a projectively-flat hermitian bundle with rank $d_i+1$. Thus $\hat{X} = P(\mathcal{O}(-1)_{E_0} \oplus \cdots \oplus \mathcal{O}(-1)_{E_l}) \rightarrow \hat{S}$, where $\mathcal{O}(-1)_{E_i}$ is the (fibrewise) tautological line bundle over $P(E_i) \rightarrow \Sigma$. 

\item $g$ is also semisimple (see Definition 2 of \cite{ACGT2}). Then we conclude that the universal cover of $(\hat{S}, J_{\hat{S}}, g_{\hat{S}})$ is the product of K\"ahler manifolds 
$$
\prod_{i=0}^l (\mathbb{C}P^{d_i}, J_i, g_i) \times (\Sigma, J_\Sigma, g_\Sigma),
$$
where $g_i$ is a Fubini-Study metric with scalar curvature $2d_i(d_i+1)$ and $g_\Sigma$ is a cscK metric. Let $\omega_i$ be the K\"ahler form of $g_i$ and $\omega_\Sigma$ be the K\"ahler form of $g_\Sigma$. We fix a
hermitian metric on $E_i$ whose Chern connection has curvature 
$\Omega_i \otimes Id_{E_i}$ with
$$
\Omega_i - \Omega_0  = p_i \omega_\Sigma, \quad i \geq 1.
$$
We also let $p_\Sigma = (p_1, \ldots , p_l) \in \mathbb{R}^l \cong \mathfrak{t}$. Now we want construct $\hat{\theta} = (\hat{\theta}_1, \ldots, \hat{\theta}_l)$ to be a principle $\mathbb{T}$-connection associated with the principle $\mathbb{T}^c$-bundle $X^0$ over $\hat{S}$. We let $\hat{\theta}_i$ be a connection 1-form for the principal $U(1)$-bundle over $\hat{S}$, associated to the line bundle $\mathcal{O}(−1)_{E_i}$, with curvature $d \hat{\theta}_i = -\omega_i + \Omega_i$. 

\item Following the notation of \cite{ACGT2}, we write the toric invariant metric on $V=\mathbb{C}P^l$ as
$$
g_V = \langle dz, G, dz \rangle + \langle dt, H, dt \rangle,
$$
where $G$ is a positive definite $S^2 \mathfrak{t}$-valued function on $P^0$, $H$ is its inverse in $S^2 \mathfrak{t}^*$. Moreover, $\langle \cdot, \cdot, \cdot \rangle$ denotes the point-wise contraction $\mathfrak{t}^* \times S^2 \mathfrak{t} \times \mathfrak{t}^* \rightarrow \mathbb{R}$ or the dual contraction. On $P^0$, the K\"ahler metric $g$ can be written as 
\begin{equation}
\label{cm}
g = ( \langle p_\Sigma, z \rangle + c_\Sigma) g_\Sigma +  \sum_{j=0}^l ( \langle p_j, z \rangle + c_j) g_j + \langle dz, G, dz \rangle + \langle \hat{\theta}, H, \hat{\theta} \rangle,
\end{equation}
where $p_j, 0 \leq j \leq l$ is the inward normal vector of the facets $F_j$ of $P$ and we choose $c_j$ such that $\langle p_j, z \rangle + c_j = 0$ on $F_j$.  We also choose $c_\Sigma$ such that $\langle p_\Sigma, z \rangle + c_\Sigma$ is positive on $P$. 
\end{enumerate}

A K\"ahler metric $g$ in the form of (\ref{cm}) is an admissible metric. A K\"ahler class $\Omega$ is admissible if $\Omega$ contains an admissible metric. Notice that $G=(u_{ij}), H=(u^{ij})$, where $u$ is a symplectic potential on $P$ satisfying the Guillemin boundary conditions. The expression of the scalar curvature is calculated in \cite{ACGT2}:
$$
R_g = \frac{Scal_\Sigma}{\langle p_\Sigma, z \rangle + c_\Sigma} +  \sum_{j=0}^l \frac{Scal_j}{\langle p_j, z \rangle + c_j} - \frac{1}{p(z)} \sum_{r,s=1}^l \frac{\partial^2}{\partial z_r \partial z_s} (p(z)u^{rs}),
$$
where $p(z)=(\langle p_\Sigma, z \rangle + c_\Sigma) \prod_{j=0}^l (\langle p_j, z \rangle + c_j)^{d_j}.$ And the volume form is
$$
\omega^n = p(z) \omega_\Sigma \wedge \left( \bigwedge_{j=0}^l \omega_j^{\wedge d_j} \right) \wedge \langle dz \wedge \hat{\theta} \rangle^l,
$$
where $n$ is the dimension of $X$.

Since $Scal_\Sigma$ and $Scal_j, 0 \leq j \leq l$ are constants, we can run the Calabi flow on the symplectic potential level, i.e.,
$$
\frac{\partial u}{\partial t} = \underline{R} - R_u.
$$
The modified Calabi flow on the symplectic potential level is
$$
\frac{\partial u}{\partial t} = - R_u^{\perp},
$$
where 
$$
R_u^{\perp} = \langle A, z \rangle + B + R_u.
$$
We refer to \cite{ACGT2} for the detailed expressions of the a-priori determined vector (or constant) $A$ (or $B)$. We start the Calabi flow from an admissible K\"ahler metric $\omega$. Let $u(t)$ be a one parameter group of symplectic potentials corresponding to a modified Calabi flow. Also let $\varphi(t)$ be the K\"ahler potential which is the difference of the Legendre transform of $u(t)$ and $u(0)$. We express the extremal metric $\omega_0$ as $\omega_0 = \omega + \sqrt{-1} \partial \bar{\partial} \varphi_0$.  Notice that the modified Calabi flow is just the Calabi flow pulling back by the extremal vector field \cite{HZ}. Since the Calabi flow decreases the distance, we conclude that the modified Calabi flow also decreases the distance, i.e., $d(t) = d(\varphi_0, \varphi(t))$ is a decreasing function in terms of $t$. Thus
$$
d(u(0), u(t)) = d(0, \varphi(t)) \leq d(0, \varphi_0) + d(\varphi_0, \varphi(t)) < C.
$$
Let $v_t(s) = (1-s) u(0) + s u(t), 0 \leq s \leq 1$ and $\phi_t(s)$ be the K\"aler potential which is the difference of Legendre transform of $v_t(s)$ and $u(0)$. Then $\phi_t(s)$ is the geodesic connecting $\varphi(0)$ and $\varphi(t)$. Moreover,
$$
d(u(0), u(t)) = \sqrt {\int_X (u(t)-u(0))^2 ~ \omega^n } = \sqrt {\int_P (u(t)-u(0))^2 ~ p(z) d \mu } 
$$
Thus we conclude that there exists a constant $C > 0$ such that for any $t > 0$, 
$$
\int_P u(t)^2 ~ p(z) d \mu < C.
$$
Then we have the following proposition.

\begin{prop}
\label{ib}
For every $p \in P^0$, there exists a constant $C(p)$ depending only on the Euclidean distance between $p$ and $\partial P$ such that for any $t > 0$,
$$
|u|(t, p) < C(p), \quad |D u|(t,p) < C(p),
$$
where $D  u$ is the Euclidean derivative of $u$.
\end{prop}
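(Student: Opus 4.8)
The plan is to derive both estimates from the weighted bound $\int_P u(t)^2\, p(z)\, d\mu < C$ established above, combined with the convexity of the symplectic potential $u(t,\cdot)$ on $P$. Fix $p \in P^0$ and put $\rho = \tfrac14\operatorname{dist}(p,\partial P)$, so that the closed ball $\bar B(p,2\rho)$ lies in $P^0$ and stays at Euclidean distance at least $\tfrac12\operatorname{dist}(p,\partial P)$ from $\partial P$. Each factor of $p(z) = (\langle p_\Sigma, z\rangle + c_\Sigma)\prod_{j=0}^l(\langle p_j, z\rangle + c_j)^{d_j}$ is positive and continuous on $P^0$, so there is a constant $c>0$, depending only on $\operatorname{dist}(p,\partial P)$ and $P$, with $p(z) \ge c$ on $\bar B(p,2\rho)$. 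Hence $\int_{B(p,2\rho)} u^2\, d\mu \le C/c =: K$ and, by Cauchy--Schwarz, $\int_{B(p,2\rho)}|u|\, d\mu \le K_1 := (K\,|B(p,2\rho)|)^{1/2}$. All constants below depend only on $\operatorname{dist}(p,\partial P)$ and $P$, and I suppress the flow parameter $t$ since every estimate will be uniform in $t$.

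For the upper bound I would use convexity through Jensen's inequality: for $x$ with $\bar B(x,\rho) \subset B(p,2\rho)$, the centroid of $B(x,\rho)$ equals $x$, so
\[
u(x) \le \frac{1}{|B(x,\rho)|}\int_{B(x,\rho)} u \le \frac{1}{|B(x,\rho)|}\int_{B(p,2\rho)}|u| \le \frac{K_1}{|B(p,\rho)|} =: S_0 .
\]
In particular $\sup_{B(p,\rho)} u \le S_0$. Granting a lower bound on $u(p)$, the gradient estimate will be immediate from the supporting hyperplane inequality $u(y) \ge u(p) + \langle Du(p), y-p\rangle$: evaluating at $y = p + \rho\, Du(p)/|Du(p)|$ gives $|Du(p)| \le \rho^{-1}(\sup_{B(p,\rho)} u - u(p)) \le \rho^{-1}(S_0 - u(p))$.

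The crux, and the only genuinely delicate point, is the lower bound $u(p) \ge -C(p)$, because convexity yields pointwise control only from above. Here I would combine convexity along rays with the integral bound. Suppose $u(p) = -M$ with $M \ge S_0$. For a unit vector $e$ the map $s \mapsto u(p+se)$ is convex on $[0,\rho]$ with value $-M$ at $s=0$ and at most $S_0$ at $s=\rho$, so the chord inequality gives $u(p+se) \le -M(1-s/\rho) + S_0\, s/\rho = -M + (M+S_0)\, s/\rho$; since $M \ge S_0$ this is $\le -M/2$ once $s \le \rho/4$. As $e$ is arbitrary, $u \le -M/2$ on all of $B(p,\rho/4)$, whence
\[
K \ge \int_{B(p,\rho/4)} u^2\, d\mu \ge \tfrac14 M^2\, |B(p,\rho/4)| .
\]
This forces $M \le C(p)$, i.e.\ $u(p) \ge -C(p)$. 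Together with $u(p) \le S_0$ this gives $|u|(p) < C(p)$, and then the displayed gradient bound yields $|Du|(p) < C(p)$. The only bookkeeping to watch is that the balls $\bar B(p,2\rho)$, $B(p,\rho)$ and $B(p,\rho/4)$ all remain inside $P^0$ and that $c$, $S_0$, $K$ and the final constant depend only on $\operatorname{dist}(p,\partial P)$ and $P$, which is arranged by the choice $\rho = \tfrac14\operatorname{dist}(p,\partial P)$.
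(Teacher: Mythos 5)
Your proposal is correct and follows the same route as the paper: both use the positivity of $p(z)$ on compact subsets of $P^0$ to convert the weighted bound $\int_P u(t)^2\, p(z)\, d\mu < C$ into an unweighted local $L^2$ bound, then deduce the pointwise bound from convexity, and finally get the gradient bound from the pointwise bound via convexity. The only difference is that where the paper cites Proposition 3.4 of \cite{FH} for the key step (a convex function with a local $L^2$ bound is pointwise bounded in the interior), you prove that step directly---Jensen's inequality for the upper bound and the chord-plus-integral argument for the lower bound---which makes your argument self-contained but is not a different method.
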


\begin{proof}
Let $P_\epsilon$ be the sets of points in $P$ whose Euclidean distance to $\partial P$ is at lease $\epsilon$. Since $p(z)$ is positive in the interior of $P$. We obtain
$$
\int_{P_\epsilon} u(t)^2 d \mu < C(\epsilon),
$$
where $C(\epsilon)$ is a constant depending only on $\epsilon$. Applying Proposition 3.4 of \cite{FH}, we know that $u(t)$ is bounded in the interior of $P_\epsilon$. Thus the Euclidean derivative $D u(t)$ is also bounded in the interior of $P_\epsilon$ by the convexity of $u(t)$.
\end{proof} 

\begin{proof}[Proof of Theorem (\ref{proof of cos})]
Let us assume that our Calabi flow starting from an admissible metric $\omega$. Let $u(t)$ be the corresponding symplectic potential of the modified Calabi flow. Proposition 8 of \cite{ACGT1} shows that each leave $V=\mathbb{C}P^l$ is totally geodesic in $X$. Thus the curvature is bounded on $X$ implies that the curvature is bounded on $P$, i.e., there exists a constant $C > 0$ such that for any $t > 0,~ p \in P$,
$$
\sum_{i,j,k,l} u(t)^{ij}_{~kl} u(t)^{kl}_{~ ij} (p) < C.
$$
Proposition (\ref{ib}) and the proof of Theorem (\ref{dia}) tell us that there exists a constant $C > 0$ such that for any $t > 0$, 
$$
|u(t)|_{L^\infty} < C.
$$ 
Then Donaldson \cite{D4} shows that
$$
|\varphi(t)|_{L^\infty} < C.
$$
Applying Theorem 5.1 of \cite{ChenHe}, we conclude that the metric $\omega_{\varphi(t)} = \omega+ \sqrt{-1} \partial \bar{\partial} \varphi(t)$ is equivalent to $\omega$ and the $C^{3,\alpha}$ norm of $\varphi(t)$ is uniformly bounded. By the smoothing property of the Calabi flow, i.e., Theorem 3.3 of \cite{ChenHe}, we can uniformly control the $C^\infty$ norm of $\varphi(t)$. Corollary 2.2 of \cite{HZ} shows that the modified Calabi flow converges to an extremal metric. Moreover, Theorem 1.5 of \cite{HZ} shows that the convergence is exponentially fast.
\end{proof}

Hongnian Huang, \ hnhuang@gmail.com

CMLS

Ecole Polytechnique
\end{document}